\documentclass[11pt, reqno]{amsproc}

\usepackage{geometry}
\usepackage{color}
\usepackage{verbatim}
\usepackage[utf8]{inputenc}
\usepackage{t1enc}
\usepackage[english]{babel}

\usepackage{amsmath,amssymb,amsthm}
\usepackage{mathrsfs,esint}
\usepackage{graphicx,wrapfig}
\usepackage[format=hang]{caption}

\newcommand*{\R}{{\mathbb R}}

\newcommand*{\N}{{\mathbb N}}

\newcommand*{\eps}{\varepsilon}

\newcommand*{\Om}{\Omega}

\newcommand{\uB}{\overline{u}}
\newcommand{\vB}{\overline{v}}

\newcommand{\xB}{\overline{x}}
\newcommand{\yB}{\overline{y}}
\newcommand{\zB}{\overline{z}}

\providecommand*{\vint}[1]{\mathchoice
          {\mathop{\vrule width 5pt height 3 pt depth -2.5pt
                  \kern -9pt \kern 1pt\intop}\nolimits_{\kern -5pt{#1}}}
          {\mathop{\vrule width 5pt height 3 pt depth -2.6pt
                  \kern -6pt \intop}\nolimits_{\kern -3pt{#1}}}
          {\mathop{\vrule width 5pt height 3 pt depth -2.6pt
                  \kern -6pt \intop}\nolimits_{\kern -3pt{#1}}}
          {\mathop{\vrule width 5pt height 3 pt depth -2.6pt
                  \kern -6pt \intop}\nolimits_{\kern -3pt{#1}}}}
\newcommand*{\jint}{\fint}
\DeclareMathOperator{\Lip}{Lip}

\DeclareMathOperator{\rad}{rad}

\numberwithin{equation}{section}
\theoremstyle{plain}
\newtheorem{thm}[equation]{Theorem}

\newtheorem{prop}[equation]{Proposition}

\newtheorem{lem}[equation]{Lemma}

\theoremstyle{definition}

\newtheorem{defn}[equation]{Definition}

\newtheorem{remark}[equation]{Remark}

\begin{document}

\title[Approximation of harmonic functions via graphs]
{Approximation of harmonic functions on metric measure spaces of controlled geometry via discrete graphs} 
 \author{Almaz Butaev} 
\address{Department of Mathematics and Statistics, University of the Fraser Valley, 33844 King Road, Abbotsford, BC  V2S 7M7, Canada.}
\email{almaz.butaev@ufv.ca}
\author{Liangbing Luo} 
\address{Department of Mathematics, 48 University Avenue, Jeffery Hall, Queens University, Kingston, ON K7L 3N6, Canada.}
\email{liangbingluo94@gmail.com}
\author{Nageswari Shanmugalingam} 
\address{Department of Mathematical Sciences, P.O.~Box 210025, University of Cincinnati, Cincinnati, OH~45221-0025, U.S.A.}
\email{shanmun@uc.edu}
\thanks{
 N.S.'s work is partially supported by National Science Foundation (US) grant DMS~\#234874. A.B. acknowledges the support of the Natural Sciences and Engineering Research Council of Canada (NSERC), [funding reference number RGPIN-2025-05594]}

\begin{abstract}
Given a complete doubling metric measure space $X$ that supports a $2$-Poincaré inequality, we approximate harmonic functions on a 
bounded 
domain with a prescribed Newton-Sobolev boundary data. Our approach is based on the approximation of the underlying space $X$ by a family of graphs (see \cite{GL}).
This approximated harmonic function is realized as the weak limit of a sequence of functions obtained from the graph minimizers. We prove that such a function is a minimizer with respect to a nonlinear energy form on $N^{1,2}_0(\Om)$, which is
in turn,
majorized by the upper gradient energy on $N^{1,2}(X)$. This energy form on $N^{1,2}_0(\Om)$ is obtained as a 
$\Gamma$-limit of a sequence of induced energy forms 
projected from the discrete energy form on the approximating graphs. 
\end{abstract}

\maketitle

\noindent
    {\small \emph{Key words and phrases}: 
    Harmonic functions, graph approximation, $\Gamma$-convergence, energy form, energy minimizers, Dirichlet boundary value problem,
    discrete approximations.
}

\medskip

\noindent
    {\small Mathematics Subject Classification (2020):
Primary: 31E05
Secondary: 46E36, 49Q20, 65Z99, 
30L15.
}

\section{Introduction}
In the seminal work~\cite{Chee}, it was shown that when a doubling measure $\mu$ on a complete metric space $(X,d)$ supports a $p$-Poincar\'e
inequality with respect to the intrinsic upper gradient structure on $X$, then there is a linear differential structure $D$ on $(X,d,\mu)$ such that
every (locally) Lipschitz function $f$ on $X$ is $\mu$-almost everywhere differentiable. Moreover, there is a measurable inner product structure
on the differential such that whenever $f$ is a Lipschitz function on $X$, we have $\|Df\|\approx\Lip f$ $\mu$-a.e.~in $X$. This comparison
is also known to extend to the wider class of Newton-Sobolev functions, that is, the class $N^{1,p}(X)$, with $D:N^{1,p}(X)\to L^p(X,\R^N)$
for some fixed $N$ that depends solely on the data related to the doubling property and the Poincar\'e inequality, and
$\|Df\|\approx g_f$ $\mu$-a.e.~in $X$ when $g_f$ is the minimal $p$-weak upper gradient of $f$, see~\cite{Chee, HKSTbook}.

However, the proof of existence of the linear differential structure in~\cite{Chee} is an existence proof, and in addition, the corresponding 
chart decomposition of $X$ gives only measurable atlas of $X$ without an explicit construction of such an atlas. The coordinate chart, while
is Lipschitz continuous (locally), is only shown to exist. The first improvement on this was in~\cite{Keith}, where it was shown that 
we can take as the coordinate charts collections of functions that are distance functions from certain points in the space. However, these 
coordinate maps are on measurable charts, and there may be more than one measurable chart needed to cover $\mu$-almost all of $X$.
The benefit of the differential structure is that when considering energy minimization on domains in $X$, with energy measured using
the Cheeger differential structure, the minimizer also satisfies a weak form of a PDE, namely the corresponding Euler-Lagrange equation;
such an equation is unavailable when minimizing upper gradient-based energy.
However, given the ambiguity of construction of the differential structure, a numerical approximation of solutions to the energy
minimization problem is difficult to compute.
Based on the work~\cite{ACDiM}, the paper~\cite{DS} gave a construction of a (nonlinear) Dirichlet-type form on a complete doubling
metric measure space supporting a $p$-Poincar\'e inequality, which does provide a way of numerically approximating
energy, see also~\cite{GL, BLS}.

This step in the process of constructing ways of numerically approximating 
energies in a metric measure space was to find a way of approximating 
energies via a discrete graph approximation of the metric measure space. This was undertaken in~\cite{BLS} using the tools of
$\Gamma$-convergence, with the limit energy also associated with a Dirichlet form. 
However, the tools of $\Gamma$-convergence does not suffice in obtaining energy minimizers on $X$ as
limits of energy minimizers on graphs. Traditional theory of Dirichlet forms ask for a stronger convergence of energy forms, called
\emph{Mosco convergence} as in~\cite{Mosco}, as for example in the works of Kuwae and Shioya (for instance, \cite{Shioya1, Shioya2}).
These works assume a uniform bound on either the Ricci curvature or the sectional curvature of the spaces, in particular that
it is an $RCD(K,N)$ space for some finite $N$, but such bounds
are generally unavailable even when the sequence metric spaces is equi-doubling and supports an equi-Poincar\'e inequality,
as seen in the case of the Heisenberg groups. \cite{Juillet2009} showed that the Heisenberg group 
equipped with the Carnot-Carath\'eodory distance does not satisfy $CD(K,N)$ for any 
$K,N$; hence it is not an $RCD(K,N)$ space. 
The goal of the present paper is to bypass the requirement
of Mosco convergence by considering a $\Gamma$-convergence of the graph energies with respect to the weak-* topology on
$N^{1,2}(X)$. We use this notion to establish convergence of graph-energy minimizers to energy minimizers in $X$. The following is the
main theorem of this paper.

\begin{thm}\label{thm:main}
Let $(X,d)$ be a complete metric space equipped with a doubling measure $\mu$ supporting a $2$-Poincar\'e inequality. 
Let $\Om$ be a bounded
domain in $X$ with $\mu(X\setminus\Om)>0$, and we fix $f\in N^{1,2}(X)$ as the function that serves as the Dirichlet boundary data for $\Om$.
For each $r>0$ let $X_r$ be a discretization of $X$ and
the discrete measure $\mu_r$ on $X_r$ a discretization of the measure $\mu$ on $X$ in the sense of~\cite{GL}. 
Consider the graph-approximation 
$\Om_r$ of $\Om$ in $X_r$. With $E_r$ a quadratic energy form on $D^{1,2}_0(\Om_r)$ as given by~\eqref{eq:Er-def}, there is an induced
energy form $\mathcal{E}_r$ on $N^{1,2}_0(\Om)$ such that for any positive sequence $(r_k)_k$  with
$\lim_kr_k=0$ we obtain a further subsequence $(r_{k_m})_m$ for which the following two conditions hold:
\begin{enumerate}
\item[{\bf (a)}] there exists a corresponding $\Gamma$-limit energy form $\mathcal{E}$ of $(\mathcal{E}_{r_{k_m}})_m$ with the limit energy
majorized by the upper gradient energy on $N^{1,2}(\Om)$,
\item[{\bf (b)}] there is a sequence $(u_m)$ of functions in $N^{1,2}(X)$, obtained from 
the graph 
energy minimizers $\uB[r_{k_m}]$ of 
$E_{r_{k_m}}$ on $\Om_{r_{k_m}}$, with boundary data $f_{r_{k_m}}$, that
weakly converges in $N^{1,2}(X)$ to a function $u_\infty\in N^{1,2}_0(\Om)$ such that $u_\infty+f$ is an $\mathcal{E}$-energy
minimizer on $\Om$ with boundary data $f$.
\end{enumerate}
\end{thm}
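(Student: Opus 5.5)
We would build the argument on the Gigli--Lu (\cite{GL}) framework for discretizing $X$: for each $r>0$ we have a maximal $r$-separated net $X_r$, the weights $\mu_r(x)\approx\mu(B(x,r))$, and a Lipschitz partition of unity $\{\varphi_x\}_{x\in X_r}$ subordinate to the cover $\{B(x,2r)\}$. The first step is to fix two transfer maps. The \emph{discretization} $P_r u(x)=\vint{B(x,r)}u\,d\mu$ sends functions on $X$ to functions on $X_r$. The \emph{extension} $Q_r\uB(y)=\sum_{x}\uB(x)\varphi_x(y)$ sends functions on $X_r$ back to functions on $X$. The standard estimates are then proved using the doubling property and the $2$-Poincar\'e inequality:
\begin{itemize}
\item[(i)] $E_r(P_r u)\lesssim \int_{\Om'} g_u^2\,d\mu$, where $\Om'$ is a small neighborhood of $\Om$;
\item[(ii)] $\int_X g_{Q_r\uB}^2\,d\mu\lesssim E_r(\uB)$;
\item[(iii)] $\|Q_rP_ru-u\|_{L^2}\to0$ as $r\to0$.
\end{itemize}
The induced form is then defined by $\mathcal E_r(u)=E_r(P_r u)$ for $u\in N^{1,2}_0(\Om)$, with the graph boundary condition encoded so that $P_r u$ lies in $D^{1,2}_0(\Om_r)$. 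Estimate (i) gives $\mathcal E_r(u)\le C\int g_u^2$ uniformly in $r$.

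For part \textbf{(a)} we work in the weak topology of $N^{1,2}(X)$ restricted to bounded subsets of $N^{1,2}_0(\Om)$. Two facts make this workable. First, since $\mu(X\setminus\Om)>0$ and a Poincar\'e inequality holds, $N^{1,2}_0(\Om)$ satisfies a Friedrichs-type inequality $\|u\|_{L^2}\lesssim\|g_u\|_{L^2}$. Second, $N^{1,2}(X)$ is reflexive under doubling plus Poincar\'e (Cheeger), and it is separable. Together these make the weak topology on bounded sets metrizable and second countable. The general compactness theorem for $\Gamma$-convergence (Dal Maso) then extracts from any $r_k\to0$ a subsequence $r_{k_m}$ along which $\mathcal E_{r_{k_m}}$ $\Gamma$-converges to some $\mathcal E$ on each ball of $N^{1,2}_0(\Om)$. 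A diagonal argument over balls of radius $n$ removes the dependence on the ball. For the majorization, take the constant recovery sequence $u_m=u$: then $\mathcal E(u)\le\liminf_m\mathcal E_{r_{k_m}}(u)\le C\int g_u^2$. If a constant-free bound is wanted, the \cite{GL}/\cite{BLS} asymptotics $\limsup_r E_r(P_r u)\le\int g_u^2$ for Lipschitz $u$ would be combined with density.

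For part \textbf{(b)}, let $\uB_r$ minimize $E_r$ over functions on $X_r$ equal to $f_r=P_r f$ off $\Om_r$. Writing $\uB_r=f_r+\vB_r$ with $\vB_r\in D^{1,2}_0(\Om_r)$, minimality against the competitor $f_r$ together with (i) gives $E_r(\uB_r)\le E_r(f_r)\lesssim\|f\|^2_{N^{1,2}}$. Set $u_m=Q_{r_{k_m}}\vB_{r_{k_m}}$. By (ii) and Friedrichs, these are bounded in $N^{1,2}(X)$. Their supports lie in shrinking neighborhoods of $\Om$, and a small correction, for instance multiplication by a cutoff or a projection, puts them in $N^{1,2}_0(\Om)$. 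Reflexivity then yields a weakly convergent subsequence with limit $u_\infty$. The limit lies in $N^{1,2}_0(\Om)$ by weak closedness of that subspace, using the support information. Minimality is the usual $\Gamma$-convergence argument applied to the shifted functionals $v\mapsto\mathcal E_r(v+f)$. The minimizers of these discrete problems are, up to the transfer errors, the $\vB_r$. By equicoercivity, cluster points of minimizers minimize the $\Gamma$-limit. Concretely, for any $w\in N^{1,2}_0(\Om)$ we take a recovery sequence $w_m\rightharpoonup w$ and compare
\[
\mathcal E(u_\infty+f)\le\liminf_m \mathcal E_{r_{k_m}}(u_m+f)\le\liminf_m E_{r_{k_m}}(\uB_{r_{k_m}})+o(1)\le\limsup_m\mathcal E_{r_{k_m}}(w_m+f)=\mathcal E(w+f).
\]

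The main obstacle is the middle inequality $\mathcal E_r(Q_r\vB_r+f)\le E_r(\uB_r)+o(1)$, which amounts to the near-identity $P_rQ_r\approx\mathrm{id}$ measured in graph energy rather than in $L^2$. It also forces the $\Gamma$-limit to be taken for the shifted forms $v\mapsto\mathcal E_r(v+f)$ rather than for $\mathcal E_r$ alone. The first thing I would try is to show that $P_rQ_r\vB-\vB$ is controlled by the discrete gradient at scale comparable to $r$, giving $E_r(P_rQ_r\vB-\vB)\le\theta E_r(\vB)$. This is already weaker than the $o(1)$ needed. If it fails, I would redefine the induced form as $\mathcal E_r(u)=\inf\{E_r(\vB):\ Q_r\vB=u\}$, or as a relaxation of it. With that choice the middle inequality becomes trivial, and the work moves into proving the liminf and majorization estimates via (i)--(iii).
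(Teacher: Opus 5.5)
\textbf{Gap in the main line.} The failing step is the one you flag yourself: $\mathcal{E}_r(Q_r\vB_r+f)\le E_r(\uB_r)+o(1)$. You average over the overlapping balls $B(x,r)$ and extend by a partition of unity, so $P_rQ_r$ is a nontrivial Markov-type averaging operator on the graph. Doubling plus Poincar\'e give at most $E_r(P_rQ_r\vB-\vB)\lesssim E_r(\vB)$ for the plain graph energy, with no small factor. What you actually need is that this smoothing does not raise the energy of the specific discrete minimizers by more than $o(1)$, and nothing in the hypotheses gives that.

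It is worse if the boundary condition is encoded by $P_ru\in D^{1,2}_0(\Om_r)$. Then $P_rQ_r\vB_r$ typically violates it, because the support of $Q_r\vB_r$ reaches into the averaging balls of vertices just outside $\Om_r$, so $\mathcal{E}_r(u_m+f)=\infty$ outright. Without this step your chain only gives quasi-minimality, $\mathcal{E}(u_\infty+f)\le C\,\mathcal{E}(w+f)$. A related symptom: $u\mapsto E_r(P_ru)$ is not equicoercive on $N^{1,2}_0(\Om)$, since averages at scale $r$ do not see smaller oscillations. That is why you were pushed into a ball-by-ball diagonal argument in (a).

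\textbf{The missing idea.} The core of the paper is to make discretization followed by extension exactly the identity, and to let the induced form be $+\infty$ off the image of the extension.
\begin{enumerate}
\item[(1)] The paper averages over the pairwise disjoint balls $B(\xB,r/4)$.
\item[(2)] It builds an extension $P_r:L^2(X_r)\to N^{1,2}(X)$ that equals $u(\xB)$ identically on $\overline{B}(\xB,r/4)$. This is done either with a Whitney partition of unity on the complement of these balls, or with the McShane-type infimum $\inf\{u(\xB)+\int_\gamma\overline{g}\,ds\}$. Hence $(P_ru)_r=u$ exactly.
\item[(3)] It proves your estimate (ii) for this particular operator.
\item[(4)] The $20r$ buffer in the definition of $\Om_r$ gives $P_r(D^{1,2}_0(\Om_r))\subset N^{1,2}_0(\Om)$. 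So no cutoff correction is needed; a cutoff near $\partial\Om$ would in any case cost non-negligible energy.
\end{enumerate}
The paper then sets $\mathcal{E}_r(v)=E_r(v_r)$ on $P_r(D^{1,2}_0(\Om_r))$ and $+\infty$ elsewhere, with the $f_r$-shift already built into $E_r$. Consequently $P_r\uB[r]$ is an exact minimizer of $\mathcal{E}_r$ on $N^{1,2}_0(\Om)$. Estimate (ii), the bound on $E_r(0)$ from \cite{BLS}, and Lemma~\ref{lem:Maz} together give $\mathcal{E}_r\ge\Psi$ with $\Psi$ coercive. Dal Maso's compactness theorem then applies directly in the weak topology, and the standard $\Gamma$-convergence argument finishes.

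\textbf{Your fallback.} Defining $\mathcal{E}_r(u)=\inf\{E_r(\vB):\vB\in D^{1,2}_0(\Om_r),\ Q_r\vB=u\}$ is essentially this definition. It would close the gap even with your $Q_r$: equicoercivity again comes from (ii), and the liminf inequality is supplied by the abstract compactness theorem. Be aware, though, that it invalidates your majorization argument in (a). Under it $\mathcal{E}_r(u)=\infty$ off the image of $Q_r$, so the constant sequence gives nothing. You would need a genuine recovery sequence built from (i)--(iii), together with an approximation of $u$ by functions vanishing near $\partial\Om$.
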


Thanks to the Rellich-Kondrachov embedding theorem, there is a subsequence of the sequence $u_m$ 
that converges in $L^p(X)$ to $u_\infty$, see for instance~\cite[Theorem~8.1]{HajKo}. Hence $u_m+f$ 
can be seen
as numerical approximations of the solution $u_\infty+f$.

The functions $u_m$ can be obtained from $\uB[r_{k_m}]$ as projections $P_{r_{k_m}}\uB[r_{k_m}]$, where
such projections $P_r$ map 
functions on $X_r$ to functions in $N^{1,2}(X)$. This is accomplished via the following theorem, which is also of independent interest.

\begin{thm}\label{thm:main-projection}
Let $(X,d)$ be a complete metric space equipped with a doubling measure $\mu$ supporting a $2$-Poincar\'e inequality, and let $\Om$ be a bounded
domain in $X$ with $\mu(X\setminus\Om)>0$. Let $X_r$ be a discretization of $X$ and
the discrete measure $\mu_r$ on $X_r$ a discretization of the measure $\mu$ on $X$ in the sense of~\cite{GL}.
Then there is a projection map $P_r:L^2(X_r)\to N^{1,2}(X)$ that satisfies the following three properties:
\begin{enumerate}
\item[{\bf (a)}] For each 
$u:X_r\to\R$ we have that for each $\xB\in X_r$,
\begin{equation} \label{proj_cond1Main}
    (P_ru)_r(\xB)=u(\xB).
\end{equation}
\item[{\bf (b)}] There is a constant $C\ge 1$ such that following lower bound estimate holds:
\begin{equation} \label{proj_cond2Main}
    \int_X g_{P_ru}^2\, d\mu
    \le C\, \sum_{\overline{x}\in X_r}\, \sum_{\overline{x}\sim \overline{y}\in X_r}\, \frac{|u(\overline{y})-u(\overline{x})|^2}{r^2}\, \mu_r(\{\overline{x}\})
\end{equation}
with the constant $C$ independent of $r$ and of $u$. 
\item[{\bf (c)}] The projection map also satisfies the following ``boundary consistency'':
\begin{equation} \label{proj_cond3Main}
    P_r:D^{1,2}_0(\Om_r)\to N^{1,2}_0(\Om).
\end{equation}
\end{enumerate}
\end{thm}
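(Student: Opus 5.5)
We construct $P_r$ by a Lipschitz partition of unity subordinate to the cover of $X$ by balls centred at the points of $X_r$. Recall that in the sense of~\cite{GL}, $X_r$ is a maximal $r$-separated set, $\xB\sim\yB$ when $d(\xB,\yB)<\lambda r$ for a fixed $\lambda$, and $\mu_r(\{\xB\})\approx\mu(B(\xB,r))$ by the doubling property. The discrete restriction is $v_r(\xB)=\vint{B(\xB,r)} v\,d\mu$ (or the corresponding averaging over the Voronoi-type cell used in~\cite{GL}).

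For each $\xB\in X_r$ take the Lipschitz bump
\[
\psi_{\xB}(x)=\max\{0,\min\{1,2-d(x,\xB)/r\}\},
\]
which equals $1$ on $B(\xB,r)$, vanishes outside $B(\xB,2r)$, and is $1/r$-Lipschitz. By doubling, the family $\{B(\xB,2r)\}$ has bounded overlap, and $\sum\psi_{\xB}\ge 1$ everywhere. Set $\varphi_{\xB}=\psi_{\xB}/\sum_{\yB}\psi_{\yB}$; each $\varphi_{\xB}$ is $C/r$-Lipschitz and supported in $B(\xB,2r)$. The naive choice $Pu=\sum_{\xB}u(\xB)\varphi_{\xB}$ gives (b) and (c) easily but not the exact interpolation (a). I would therefore correct it. Since the averages $(\varphi_{\yB})_r(\xB)$ form a matrix $A$, exact interpolation amounts to solving $A c=u$. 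It is simpler to use bumps $\psi_{\xB}$ with disjoint supports $B(\xB,r/4)$ (possible by $r$-separation) and define
\[
P_ru=\sum_{\xB}\bigl(u(\xB)+c_{\xB}\bigr)\dots
\]
Concretely, I would take
\[
P_ru=Qu+\sum_{\xB}\bigl(u(\xB)-(Qu)_r(\xB)\bigr)\,\eta_{\xB},
\]
where $Qu=\sum u(\yB)\varphi_{\yB}$ and $\eta_{\xB}$ is a Lipschitz function supported in $B(\xB,r/4)$ with $(\eta_{\xB})_r(\xB)=1$ and $\|\eta_{\xB}\|_{\mathrm{Lip}}\le C/r$. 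Such an $\eta_{\xB}$ can be obtained by scaling a bump, since $\mu(B(\xB,r/8))\approx\mu(B(\xB,r))$. The $\eta$'s have disjoint supports and $(\eta_{\xB})_r(\yB)$ must vanish for $\yB\ne\xB$; this holds if the averaging region for $\yB$ misses $B(\xB,r/4)$, which I would arrange by using averages over $B(\yB,r/4)$ or by adjusting the radii. With that in place, (a) follows by direct computation.

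For the gradient bound (b), on $B(\xB,2r)$ we have $Qu-u(\xB)=\sum_{\yB}(u(\yB)-u(\xB))\varphi_{\yB}$, where only neighbours $\yB\sim\xB$ contribute (taking $\lambda\ge 4$). Hence
\[
g_{Qu}\le \frac{C}{r}\sum_{\yB\sim\xB}|u(\yB)-u(\xB)| \quad\text{on } B(\xB,r).
\]
The correction coefficient satisfies $|u(\xB)-(Qu)_r(\xB)|\le\sum_{\yB\sim\xB}|u(\yB)-u(\xB)|$, so the same bound holds for the correction term. Squaring, using Cauchy--Schwarz with the uniformly bounded number of neighbours, integrating over the cover $\{B(\xB,r)\}$, and using $\mu(B(\xB,r))\approx\mu_r(\{\xB\})$ gives~\eqref{proj_cond2Main}. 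The $L^2$ bound placing $P_ru$ in $N^{1,2}(X)$ follows similarly from $|P_ru|\le C\sum_{\yB\sim\xB}|u(\yB)|$.

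For the boundary consistency (c), the key point is the definition of $D^{1,2}_0(\Om_r)$: $u$ vanishes at vertices outside $\Om_r$, where $\Om_r$ consists of vertices whose enlarged balls $B(\xB,Cr)$ lie in $\Om$, or whatever the paper's definition is. Then $P_ru$ is supported in $\bigcup_{\xB\in\Om_r}B(\xB,2r)\subset\Om$, and it is compactly contained if the defining ball is large enough. A Lipschitz function with compact support in $\Om$ lies in $N^{1,2}_0(\Om)$.

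I expect the main obstacle to be reconciling exact interpolation (a) with the precise definition of the discretization $(\cdot)_r$ from~\cite{GL}. If $(\cdot)_r$ averages over overlapping regions, the correction functions do not decouple. In that case I would fall back to solving $Ac=u-(Qu)_r$ via a Neumann series: I would show that $A$ is diagonally dominant by making $\eta_{\xB}$ concentrated near $\xB$, which keeps the $\ell^\infty$-to-$\ell^\infty$ and local bounds uniform in $r$.
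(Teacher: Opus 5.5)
Your construction is correct, and it takes a genuinely different route from the paper's. The paper's conventions are:
\begin{itemize}
\item $u_r(\xB)$ is the average over $B(\xB,r/4)$, and these balls are pairwise disjoint by $r$-separation;
\item $\xB\sim\yB$ if and only if $d(\xB,\yB)\le 3r$;
\item $\mu_r(\{\xB\})=\mu(B(\xB,r/4))$;
\item $\Om_r=\{\xB: B(\xB,20r)\subset\Om\}$.
\end{itemize}
With these, your primary route closes: $(\eta_{\xB})_r(\yB)=\delta_{\xB\yB}$, so (a) follows by linearity.

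The paper instead forces $P_ru\equiv u(\xB)$ on every $\overline{B}(\xB,r/4)$, which makes (a) trivial, and fills in the rest in one of two ways.
\begin{itemize}
\item A Whitney decomposition of $X\setminus\bigcup_{\xB}\overline{B}(\xB,r/4)$ with its partition of unity, each Whitney ball taking the value at a nearest vertex. This costs a three-case, multi-scale local Lipschitz estimate.
\item A McShane-type extension $P_ru(x)=\inf\{u(\xB)+\int_\gamma\overline{g}\,ds\}$. Here $\overline{g}$ is an upper gradient essentially for free, but (a) needs a chaining argument showing no path undercuts $u(\xB)$, and the resulting $P_r$ is nonlinear.
\end{itemize}
Your approach, a single-scale partition of unity plus disjointly supported mean corrections, is linear. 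All its estimates live at the one scale $r$ and use only bounded overlap and doubling. The trade-off is that your $P_ru$ matches $u(\xB)$ only in mean on $B(\xB,r/4)$, so it is tied to the specific averaging operator. The paper's pointwise constancy survives any averaging supported in $\overline{B}(\xB,r/4)$.

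Three smaller corrections:
\begin{itemize}
\item \textbf{Neighbour radius.} You only estimate $Qu$ on $B(\xB,r)$ and the correction on $B(\xB,r/4)$. The only contributing $\yB$ then satisfy $d(\xB,\yB)<3r$, so the paper's $\lambda=3$ suffices and you do not need $\lambda\ge 4$.
\item \textbf{Support in (c).} At a vertex $\zB\notin\Om_r$ the correction coefficient is $u(\zB)-(Qu)_r(\zB)=-(Qu)_r(\zB)$. This can be nonzero when $\zB$ lies within $9r/4$ of $\Om_r$. So the support of $P_ru$ is contained in $\bigcup_{\xB\in\Om_r}B(\xB,5r/2)$, not in $\bigcup_{\xB\in\Om_r}B(\xB,2r)$. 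It is still compactly inside $\Om$ because of the $20r$ margin.
\item \textbf{Your fallback.} The Neumann-series fallback for overlapping averaging regions would not work in general, and in fact no construction could. On $\R$ with $X_r=r\mathbb{Z}$ and averages over $B(\xB,r)$, let $V_j$ be the integral of $v$ over $((j-1)r,jr)$; then $V_j+V_{j+1}=2r\,u(jr)$. Take $u=\chi_{\{0\}}$. Square-summability forces $V_j=0$ for $j\le 0$, and then $|V_j|=2r$ for all $j\ge 1$. So no $v\in L^2(\R)$ has these averages. The disjointness of the balls $B(\xB,r/4)$ is therefore essential, not merely convenient.
\end{itemize}
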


Indeed, in this paper we provide two distinct constructions of $P_r$, one using a Whitney type cover, 
and the other using an upper gradient-based McShane-type extension.

The approximating energies $\mathcal{E}_r$, $r>0$, considered in~\cite{BLS}, only give a weak control
in terms of the upper gradient energy; indeed, for $u\in N^{1,2}(X)$ we only have
\[
\int_Xg_u^2\, d\mu\lesssim \liminf_{r\to 0^+}\mathcal{E}_r[u].
\]
Here $g_u$ is the minimal $2$-weak upper gradient of $u$.
With the above constructions of the projection map $P_r$ we have a stronger lower bound control of
\emph{each} $\mathcal{E}_r[P_ru_r]$ in terms of $\int_X g_{P_ru_r}^2\, d\mu$, thanks to condition~(b) of Theorem~\ref{thm:main-projection}.

The structure of the paper is as follows. In the next section we describe the notions of upper gradient-based Sobolev-type spaces,
doubling property, and support of Poincar\'e inequalities, and the graph approximations of the metric measure space.
In Section~\ref{Sec:3} we provide two alternate constructions of the projection map $P_r$, and prove Theorem~\ref{thm:main-projection}
for each of these constructions.
Subsequently, in Section~\ref{Sec:4} we give a construction of a family of energy forms on $N^{1,2}_0(\Om)$ and 
study the $\Gamma$-limit of these energy forms when $N^{1,2}_0(\Om)$ is equipped with the weak topology. Finally,
in Section~\ref{Sec:5}, we establish that the weak limit of the projections of graph energy minimizers are minimizers of the
$\Gamma$-limit energy form, thus proving Theorem~\ref{thm:main}.

\section{Background}

We start with a metric measure space $(X,d,\mu)$ where $(X,d)$ is complete and $\mu$ is a doubling measure supporting a $2$-Poincar\'e 
inequality. By doubling measure we mean that there is a constant $C_D\ge 1$ such that
\[
\mu(B(x,2r))\le C_D\, \mu(B(x,r))
\]
whenever $x\in X$ and $r>0$.

\subsection{Newton-Sobolev spaces}

\begin{defn}
    Given a function $f:X\to\R$, we say that a non-negative Borel function $g:X\to[0,\infty]$ is an upper gradient of $f$ if
\[
|f(\gamma(b))-f(\gamma(a))|\le \int_\gamma g\, ds
\]
when $\gamma:[a,b]\to X$ is a rectifiable path (that is, a continuous map with finite length such that $\gamma$ is absolutely continuous).
\end{defn}

\begin{defn}
    We fix $p$ with $1<p<\infty$. We say that $f\in N^{1,p}(X)$ if the norm
    \[
    \|f\|_{N^{1,p}(X)}:=\left(\int_X|f|^p\, d\mu\right)^{1/p}+\inf_g\left(\int_Xg^p\, d\mu\right)^{1/p}
    \]
    is finite, where the infimum is over all upper gradients $g$ of $f$. 
\end{defn}

Given a set $E\subset X$, the $p$-capacity of $E$ is given by
\[
\text{Cap}_p(E):=\inf_f \left(\int_X|f|^p\, d\mu+\inf_g\, \int_Xg^p\, d\mu\right),
\]
where the first infimum is over all measurable functions $f$ with $f\ge 1$ on $E$, and the second infimum is over all
upper gradients $g$ of $f$. For properties related to the concepts defined above, we direct the interested
reader to~\cite{BBbook, HKSTbook}.

\begin{defn}
    We say that the metric measure space $(X,d,\mu)$ supports a $p$-Poincar\'e inequality if there are constants
    $C_P\ge 1$ and $\lambda\ge 1$ such that for measurable function -- upper gradient pairs $(f,g)$ we have
    \[
    \jint_B|f-f_B|\, d\mu\le C_P\, \rad(B)\, \left(\jint_{\lambda B}g^p\, d\mu\right)^{1/p}
    \]
    for all balls $B\subset X$. Here, $\lambda B$ is a ball that is concentric with $B$ but with radius $\lambda\, \rad(B)$.
\end{defn}

\begin{remark}\label{rem:geodesic}
If $(X,d,\mu)$ is locally complete, $\mu$ is doubling and supports a $p$-Poincar\'e inequality, then $X$ is a quasiconvex space; there
is a constant $C_Q\ge 1$ such that for each $x,y\in X$ there is a rectifiable curve $\gamma_{x,y}$ in $X$ with end points $x,y$ such that
the length $\ell(\gamma_{x,y})$ of $\gamma_{x,y}$ satisfies $\ell(\gamma_{x,y})\le C_Q\, d(x,y)$ (see for 
instance~\cite{Chee, HKSTbook}). In this case, the inner length metric
is bi-Lipschitz equivalent to the original metric on $X$, and so we can replace the original metric with the inner length metric. 
With respect to this new metric the space becomes a length space, and so by the results of Haj\l asz and Koskela, we can
assume that $\lambda=1$ at the expense of increasing the value of $C_P$. 
Moreover, as $(X,d)$ is complete and $\mu$ is a doubling measure supported on $X$, necessarily $(X,d)$ is 
proper, that is, closed and bounded subsets of $X$ are compact. Therefore, thanks to the Arzel\`a-Ascoli theorem, 
the length space $(X,d)$ is actually a geodesic space, that is, for each $x,y\in X$ there is a curve $\gamma$ in $X$ connecting $x$ and 
$y$. such that $\ell(\gamma)=d(x,y)$.
We refer the interested reader to~\cite{HKSTbook} for 
details concerning the claims in this paragraph. 
\end{remark}

We fix a bounded domain $\Om\subset X$ such that $\mu(X\setminus\Om)>0$, and we fix $f\in N^{1,2}(X)$ which will serve
as the boundary datum for the Dirichlet problem on $\Om$. The Banach space of interest to us is the space $N^{1,2}_0(\Om)$, to which we
build a bilinear energy form with boundary data $f$.

\begin{defn}
    A function $f\in N^{1,p}(X)$ is said to be in $N^{1,p}_0(\Om)$ if $f=0$ $p$-a.e.~in $X\setminus\Om$, that is, the set
    $E:=\{x\in X\setminus \Om\, :\, f(x)\ne 0\}$ satisfies $\text{Cap}_p(E)=0$.
\end{defn}

A consequence of the above Poincar\'e inequality is the following result, first formulated by Maz'ya~\cite{Mazya} in the Euclidean setting.
A proof of this lemma in the setting of metric measure spaces can be found in~\cite[Theorem 6.21]{BBbook}. A capacitary version of
this result, which is a much stronger version of this following lemma, is called the Maz'ya capacitary inequality, see the discussion
in~\cite{Mazya, BBbook}.

\begin{lem}\label{lem:Maz}
Suppose that $(X,d,\mu)$ is a complete metric measure space with $\mu$ a doubling measure supporting a $p$-Poincar\'e inequality. Let 
$\Om\subset X$ be a bounded domain such that $\mu(X\setminus\Om)>0$. Then there is a constant $C\ge 1$, that depends solely on
the doubling and Poincar\'e constants as well as the domain $\Om$, such that for every $u\in N^{1,p}_0(\Om)$ and upper gradient $g_u$ of $u$,
\[
\int_\Om|u|^p\, d\mu\le C\, \int_\Om g_u^p\, d\mu.
\]
\end{lem}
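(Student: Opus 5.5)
We argue by contradiction combined with a compactness argument, using the Poincaré inequality only at the scale of a ball containing $\Om$. Since $|u|$ has $g_u$ as an upper gradient and belongs to $N^{1,p}_0(\Om)$ whenever $u$ does, we may assume $u\ge 0$. Fix a ball $B=B(x_0,R)$ with $\Om\subset B$, and enlarge $R$ if necessary so that $\mu(B\setminus\Om)>0$. This is possible because $\mu(X\setminus\Om)>0$ and $X=\bigcup_R B(x_0,R)$.

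\textbf{Step 1: $u$ vanishes off $\Om$.} Since $u\in N^{1,p}_0(\Om)$, we have $u=0$ $p$-a.e. in $X\setminus\Om$, hence $\mu$-a.e. there. The function $g_u\chi_{\overline{\Om}}$ is still a $p$-weak upper gradient of $u$, and the boundary has no effect after the standard modification. Concretely, for $u\in N^{1,p}_0(\Om)$ the minimal weak upper gradient vanishes a.e. on $X\setminus\Om$. So we may work with $g=g_u\chi_\Om$, which satisfies $\int_X g^p\,d\mu=\int_\Om g_u^p\,d\mu$.

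\textbf{Step 2: Poincaré on $B$ with a zero set of positive proportion.} Apply the $p$-Poincaré inequality on $B$ (with $\lambda=1$, by Remark~\ref{rem:geodesic}), upgraded via Hölder and the standard self-improvement to the $L^p$ form
\[
\jint_B|u-u_B|^p\,d\mu\le C\,R^p\jint_B g^p\,d\mu .
\]
This $(p,p)$-version follows from the $(1,p)$-Poincaré inequality in a geodesic doubling space by Hajłasz–Koskela. Let $A=B\setminus\Om$, so that $\mu(A)\ge \delta\mu(B)$ with $\delta=\mu(A)/\mu(B)>0$. Since $u=0$ on $A$, we have
\[
|u_B|^p\,\mu(A)=\int_A|u-u_B|^p\,d\mu\le \int_B|u-u_B|^p\,d\mu .
\]
Therefore
\[
\int_B|u|^p\,d\mu\le 2^{p-1}\Big(\int_B|u-u_B|^p\,d\mu+|u_B|^p\mu(B)\Big)\le 2^{p-1}(1+\delta^{-1})\int_B|u-u_B|^p\,d\mu .
\]
Combining this with the Poincaré estimate gives $\int_\Om|u|^p\le C(1+\delta^{-1})R^p\int_\Om g_u^p$. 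The resulting constant depends only on $C_D$, $C_P$, and on $R$ and $\delta$, that is, on $\Om$.

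\textbf{Main obstacle.} The delicate point is passing from the $(1,p)$ Poincaré inequality to the $(p,p)$ version. If one wishes to avoid self-improvement, one can instead run the same argument with the $L^1$ mean. The $(1,p)$ inequality together with $\mu(A)\ge\delta\mu(B)$ gives $\jint_B|u|\,d\mu\lesssim R(\jint_B g^p)^{1/p}$. One then applies this to the truncations $\min\{u,t\}$, or alternatively to $u^p$ together with chaining, to recover the $L^p$ bound. The simplest route is to cite the Sobolev–Poincaré improvement; its validity here uses the quasiconvexity from Remark~\ref{rem:geodesic}. A secondary technical point is justifying that $g_u$ may be replaced by $g_u\chi_\Om$. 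For this we use that $u=0$ off $\Om$ up to capacity zero, so $u$ is constant, $p$-a.e., on $X\setminus\Om$, and minimal weak upper gradients vanish a.e. on level sets.
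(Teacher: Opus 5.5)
Your proof is correct. The paper itself gives no argument for this lemma: it defers to Theorem~6.21 of \cite{BBbook} and points to Maz'ya's capacitary inequality as the stronger underlying result.

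Your route is the direct, elementary one. You fix a ball $B\supset\Om$ with $\delta=\mu(B\setminus\Om)/\mu(B)>0$. You use $u=0$ a.e.\ on $B\setminus\Om$ to get $|u_B|^p\mu(B)\le\delta^{-1}\int_B|u-u_B|^p\,d\mu$. You then close with the $(p,p)$-Poincar\'e inequality applied to the $p$-weak upper gradient $g_u\chi_\Om$. That last step is legitimate: the minimal $p$-weak upper gradient vanishes a.e.\ on $X\setminus\Om$, where $u=0$ q.e., and Poincar\'e inequalities pass to $p$-weak upper gradients.

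Your argument uses exactly the stated hypothesis $\mu(X\setminus\Om)>0$, and it makes the dependence of $C$ on $\Om$ explicit through $R$ and $\delta$. The capacitary route needs more machinery but only requires $\text{Cap}_p(X\setminus\Om)>0$. That matters, for instance, for bounded $X$ with $\Om$ of full measure.

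Two small remarks. First, you need neither $\lambda=1$ nor the geodesic structure. The self-improved inequality holds with some dilated ball $\lambda'B$ on the right, and $g_u\chi_\Om$ vanishes outside $\Om\subset B$.

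Second, the fallback sketches in your last paragraph would not work as written. Applying the $(1,p)$ inequality to $u^p$ produces the gradient $pu^{p-1}g$, whose $L^p$ norm is not controlled; that trick needs a $(1,1)$-Poincar\'e inequality. Truncation by itself turns weak-type bounds into strong-type ones but does not raise the exponent from $1$ to $p$. So the Haj\l asz--Koskela self-improvement you cite in Step~2 is genuinely needed, and your main argument correctly relies on it.
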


In this present note we will focus only on the case that $p=2$.

\subsection{Graph approximations of complete doubling metric measure spaces}

Given a positive real number $r$, a set $A\subset X$ is said to be $r$-separated if $d(x,y)\ge r$ whenever $x,y\in A$ with $x\ne y$.
We say that $A$ is a maximal $r$-separated set if adding another point to $A$ would result in $A$ being not $r$-separated, or equivalently,
$X=\bigcup_{x\in A}B(x,r)$ and $A$ is $r$-separated. An argument using Zorn's lemma in general metric setting, or more directly as in~\cite{HKSTbook}
when $X$ supports a doubling measure, shows that every $r$-separated subset of $X$ is a subset of a maximal $r$-separated
subset of $X$.

For $r>0$ let $X_r$ be a maximal $r$-separated subset of $X$, and is considered as the vertex set of the graph where, given 
two distinct vertices $\overline{x},\overline{y}\in X_r$ we say that $\overline{x}\sim\overline{y}$, that is, the two vertices are neighbors and
form an edge of the graph, if $d(\overline{x},\overline{y})\le 3r$.

For a function $u\in L^1_{loc}(X)$, we have a function $u_r:X_r\to\R$ given by
\[
u_r(\overline{x}):=\jint_{B(\overline{x},r/4)}u\, d\mu.
\]
When $u\in L^2(X)$ we have that $u_r\in L^2(X_r)$, where $X_r$ is equipped with the measure given by 
$\mu_r(\{\overline{x}\})=\mu(B(\overline{x},r/4))$.

It was shown in~\cite[Theorem~1.1, Theorem~1.2]{GL} that the metric spaces $X_r$, as described above, converge in the Gromov-Hausdorff
topology to $X$ as $r\to 0^+$. Moreover, the measure $\mu_r$ on $X_r$ converges in the measured Gromov-Hausdorff sense to
the measure $\mu$ on $X$. It was also shown there that when $(X,d,\mu)$ is doubling and supports a $p$-Poincar\'e inequality,
then the metric graphs $X_r$, equipped with the measure $\mu_r$, is also doubling and supports a $p$-Poincar\'e 
inequality, with the associated constants depending solely on the corresponding constants of $(X,d,\mu)$.

In \cite{BLS} we considered the forms
\begin{equation*}
    \mathcal{E}_r(u,v) = \sum_{\overline{x} \in X_r} \sum_{\overline{y} \sim \overline{x}} \frac{\left[u_r(\overline{y})-u_r(\overline{x})\right]\left[v_r(\overline{x})-v_r(\overline{y})\right]}{r^2} \mu_r(\{\xB\})
\end{equation*}
and showed that the associated quadratic functionals are comparable to the $2$-weak upper gradient energy if $(X,d,\mu)$ is doubling and supports a $2$-Poincar\'e inequality in the sense that there is a constant $C>0$ such that
$$
\frac{1}{C} \sup _{r>0} \mathcal{E}_r[u] \leq \int_X g_u^2 d \mu \leq C \liminf _{\varepsilon \rightarrow 0^{+}} \mathcal{E}_{\varepsilon}[u]
$$
Furthermore, it was shown that there is always a sequence $r_k\to 0^+$ such that  $\mathcal{E}_{r_k}$ $\Gamma$-converges to a Dirichlet form on $N^{1,2}(X)$, in the strong topology, on $N^{1,2}(X)$. However, we cannot guarantee that in the strong topology, the sequence of solutions of the Dirichlet boundary value problem associated with $\mathcal{E}_{r_k}$ converges. For this reason, in the present paper we reformulate the construction of quadratic energy forms using the weak topology on $N^{1,2}(X)$.   Indeed, as we are interested in approximations of solutions of  Dirichlet boundary value problems on a domain $\Omega\subset X$, with boundary data $f\in N^{1,2}(X)$, we focus on the Hilbert space $N^{1,2}_0(\Omega)$ in the latter part of the paper.

Given a function $u:X_r\to\R$, we set its graph energy by
\begin{equation}\label{eq:Er-def}
E_r(u):=\sum_{\overline{x}\in X_r}\, \sum_{\overline{x}\sim \overline{y}\in X_r}\, \frac{|(u+f_r)(\overline{y})-(u+f_r)(\overline{x})|^2}{r^2}\, \mu_r(\{\overline{x}\}).
\end{equation}
For each $\xB\in X_r$ we set
\begin{equation}\label{eq:graph-grad}
|\nabla_ru|(\xB):=\sum_{X_r\ni \overline{w}\sim \xB}\, \frac{|u(\xB)-u(\overline{w})|}{r}.
\end{equation}
For functions $u\in N^{1,2}_0(\Om)$, we set
\[
\widehat{\mathcal{E}_r} u:=E_r(u_r)
= \sum_{\overline{x}\in X_r}\, \sum_{\overline{x}\sim \overline{y}\in X_r}\, \frac{|(u_r+f_r)(\overline{y})-(u_r+f_r)(\overline{x})|^2}{r^2}\, 
\mu_r(\{\xB\}). 
\]
In the next section we consider projections of functions on $X_r$ to functions on $X$, and then we modify $\widehat{\mathcal{E}_r}$ further
to an energy form $\mathcal{E}_r$.

We now fix a bounded domain $\Om\subset X$ with $\mu(X\setminus\Om)>0$ as the domain in which we are interested in solving a 
Dirichlet boundary value problem. 
Let $\Om_r$ be a discretization of $\Om$ such that $\Om_r\subset X_r$, that is, $\Om_r$ consists of all $\xB\in X_r$ such that 
$B(\xB, 20r)\subset\Om$.
We set as $D^{1,2}_0(\Om_r)$ the collection of all functions
$u:X_r\to\R$ such that $u_r=0$ on $X_r\setminus\Om_r$.

\section{Proof of Theorem~\ref{thm:main-projection}: two constructions of the projection map} 
\label{Sec:3}

In this section, we want to construct a projection map $P_r:L^2(X_r)\to N^{1,2}(X)$ that satisfies the three properties
given in Theorem~\ref{thm:main-projection}. For the convenience of the reader, we repeat the three conditions here:
\begin{enumerate}
\item For each 
$u:X_r\to\R$ we have that for each $\xB\in X_r$,
\begin{equation} \label{proj_cond1}
    (P_ru)_r(\xB)=u(\xB).
\end{equation}
\item There is a constant $C\ge 1$ such that following lower bound estimate holds:
\begin{equation} \label{proj_cond2}
    \inf_g\int_X g^2\, d\mu\le C\, \sum_{\overline{x}\in X_r}\, \sum_{\overline{x}\sim \overline{y}\in X_r}\, \frac{|u(\overline{y})-u(\overline{x})|^2}{r^2}\, \mu_r(\{\overline{x}\}),
\end{equation}
where the infimum is over all upper gradients $g$ pf $P_ru$. 
 The constant $C$ is independent of $r$ (and of $u$). 
\item We also need the projection map to satisfy the following ``boundary consistency'':
\begin{equation} \label{proj_cond3}
    P_r:D^{1,2}_0(\Om_r)\to N^{1,2}_0(\Om).
\end{equation}
\end{enumerate}

In the following two subsections, we provide two possible constructions of such projection maps.

\subsection{Projection via Whitney decomposition}

The first construction of the projection $P_r$ utilizes a Whitney type decomposition of $X\setminus\bigcup_{\xB\in X_r}\overline{B}(\xB,r/4)$
and a corresponding partition of unity.

\begin{prop}[{\cite[Proposition~4.1.15 and page~104]{HKSTbook}}] \label{WhitneyFromNagesBook}
 Let $X=(X, d)$ be a doubling metric space with constant $N$ and let $O$ be an open subset of $X$ such that $X \backslash O \neq \emptyset$. There exists a countable collection $\mathcal{W}_{O}=\left\{B\left(x_i, r_i\right)\right\}_{i\in I\subset\N}$ of balls in $O$ such that
    \[
    r_i=\frac{1}{8} \operatorname{dist}\left(x_i, X \backslash O\right), 
    \]
    and 
    \[
    \Omega=\bigcup_i B\left(x_i, r_i\right), \qquad \sum_i \chi_{B\left(x_i, 2 r_i\right)} \leq 2 N^5.
    \]
    Moreover, there are constant $C\geq 1$ and nonnegative functions $\varphi_i$ such that 
    \begin{enumerate}
        \item[(i)] $\varphi_i(x)=0$ for $x \notin B\left(x_i, 2 r_i\right)$, and for every $x \in O$ we have that $\varphi_i(x) \neq 0$ 
        for at most $C$ indices $i$;
        \item[(ii)] $0 \leq \varphi_i \leq 1$ and $\varphi_i\vert_{B\left(x_i, r_i\right)} \geq C^{-1}$;
        \item[(iii)] $\varphi_i$ is $C / r_i$-Lipschitz;
        \item[(iv)]  $\sum_i \varphi_i(x)=1$ for every $x \in O$.
    \end{enumerate}
\end{prop}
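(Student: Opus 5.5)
The plan is the standard Whitney construction: cover $O$ by balls whose radii are a fixed fraction of the distance to the complement, extract a family with bounded overlap using the doubling property, and then normalize a family of Lipschitz bump functions. Note that the displayed covering identity $\Omega=\bigcup_i B(x_i,r_i)$ in the statement should read $O=\bigcup_i B(x_i,r_i)$, and that is what I will prove.

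\emph{Selecting the balls.} For $x\in O$ write $\delta(x):=\operatorname{dist}(x,X\setminus O)>0$; this function is $1$-Lipschitz. Consider the family of balls $B(x,\delta(x)/40)$, $x\in O$. Their radii are bounded on bounded sets, so the $5r$-covering lemma applies. It gives a countable subfamily $\{B(x_i,\delta(x_i)/40)\}_i$ of pairwise disjoint balls such that
\[
O\subset\bigcup_i B(x_i,\delta(x_i)/8).
\]
Countability follows from separability of the doubling space. Setting $r_i=\delta(x_i)/8$ gives the covering statement, and each $B(x_i,2r_i)$ lies inside $O$.

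\emph{Bounded overlap.} Let $z\in B(x_i,2r_i)$. Then $d(z,x_i)<\delta(x_i)/4$, and the $1$-Lipschitz property of $\delta$ gives
\[
\tfrac34\,\delta(x_i)\le\delta(z)\le\tfrac54\,\delta(x_i).
\]
Hence every index $i$ with $z\in B(x_i,2r_i)$ satisfies two things:
\begin{enumerate}
\item[(1)] $\delta(x_i)\simeq\delta(z)$, so the radii of all balls through $z$ are comparable to each other;
\item[(2)] $x_i\in B(z,\delta(z)/3)$.
\end{enumerate}
The disjoint balls $B(x_i,\delta(x_i)/40)$ for these indices therefore have radii at least $\delta(z)/54$ and all sit inside $B(z,\delta(z))$. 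By doubling, a ball of radius $\delta(z)$ contains at most $N^k$ disjoint balls of radius comparable to $\delta(z)/54$, for a fixed $k$. Careful bookkeeping of these constants should give the stated bound $2N^5$. I expect this constant-chasing to be the main, if routine, obstacle; only some uniform bound $M$ is essential for what follows.

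\emph{Partition of unity.} Define
\[
\psi_i(x):=\max\{0,\min\{1,\,2-d(x,x_i)/r_i\}\}.
\]
Each $\psi_i$ is $1/r_i$-Lipschitz, equals $1$ on $B(x_i,r_i)$, and vanishes outside $B(x_i,2r_i)$. Put $\Psi=\sum_j\psi_j$. On $O$ we have $1\le\Psi\le M$: the lower bound comes from the covering, the upper bound from bounded overlap. Set $\varphi_i:=\psi_i/\Psi$. Then (i), (iv) and $0\le\varphi_i\le1$ are immediate, and $\varphi_i\ge 1/M$ on $B(x_i,r_i)$ gives (ii).

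\emph{Lipschitz bound (iii).} Near any point of $B(x_i,2r_i)$, only at most $M$ functions $\psi_j$ are nonzero, and by (1) each has $r_j\simeq r_i$. So $\Psi$ is locally $CM/r_i$-Lipschitz there. The quotient rule, using $\Psi\ge1$, then gives a local Lipschitz constant $C/r_i$ for $\varphi_i$.

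To pass from local to global Lipschitz bounds, I would handle two cases. For points farther apart than a small multiple of $r_i$, use $0\le\varphi_i\le1$ directly. For close points, use the local estimate together with the fact that $\varphi_i$ vanishes outside $B(x_i,2r_i)$. This yields the $C/r_i$-Lipschitz bound on all of $X$.
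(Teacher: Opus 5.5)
Your scheme — balls of radius proportional to $\delta$, a doubling count for the overlap, normalized Lipschitz bumps — is the standard one. The paper itself gives no proof and only quotes \cite[Proposition~4.1.15]{HKSTbook}. The partition-of-unity half is correct, provided your ``local'' Lipschitz estimate is read as a two-point estimate, since in a general doubling space local bounds cannot be chained. Concretely: if $y\in B(x_i,2r_i)$ and $d(x,y)\le r_i$, then every $j$ with $\psi_j(x)\neq 0$ or $\psi_j(y)\neq 0$ has $r_j>r_i/2$. There are at most $2M$ such $j$, so $|\varphi_i(x)-\varphi_i(y)|\le (1+4M)\,d(x,y)/r_i$.

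Two steps do not work as written.

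\emph{The covering step.} The $5r$-covering lemma needs uniformly bounded radii, and ``bounded on bounded sets'' is not a substitute. In $\R$ the balls $B(n,n)=(0,2n)$ have radii bounded on bounded sets of centers. Yet any disjoint subfamily is a single ball, and its $5$-fold enlargement misses the others. So when $O$ is unbounded and $X\setminus O$ is bounded, $\delta$ is unbounded and the lemma cannot be quoted. What rescues you is that $\delta$ is $1$-Lipschitz. If $B(x,\delta(x)/\kappa)$ meets $B(y,\delta(y)/\kappa)$, then $\delta(x)<\frac{\kappa+1}{\kappa-1}\,\delta(y)$, and hence $d(x,y)<\frac{2}{\kappa-1}\,\delta(y)$. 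Therefore \emph{any} maximal pairwise disjoint subfamily (by Zorn) already gives $O=\bigcup_i B(x_i,\delta(x_i)/8)$ once $\kappa\ge 17$, with no enlargement by $5$. In the paper's application $\delta<r$, so there the issue is harmless.

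\emph{The constant $2N^5$.} This bound is part of the statement, and your parameters do not deliver it by routine bookkeeping. The centers of the balls $B(x_i,2r_i)\ni z$ lie in $B(z,\delta(z)/3)$ but are only $\delta(z)/50$-separated. The doubling count at that ratio requires $2^k\ge 100/3$, which gives $N^6$, and $N^6$ exceeds $2N^5$ as soon as $N\ge 3$. Deferring this step leaves the stated constant unproved.

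With the maximal-family selection above and $\kappa=17$, the separation improves to $d(x_i,x_j)\ge \delta(x_i)/17>\frac{4}{85}\,\delta(z)$. Cover $B(z,\delta(z)/3)$ by $N^4$ balls of radius $\delta(z)/48$. Two points in one such ball are at distance $<\delta(z)/24<\frac{4}{85}\,\delta(z)$, so each ball holds at most one center. This gives at most $N^4\le 2N^5$ overlapping balls. The paper only ever uses a uniform overlap bound, so this gap affects the literal constant, not anything downstream.
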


Let $\mathcal{C}  = \bigcup_{\overline{x}\in X_r} \overline{B}(\overline{x}, r/4)$.
We now use a decomposition of $O=X\setminus\mathcal{C}$, $W_O=\{B(x_i,r_i)\}$ and  and the corresponding partition of unity $\{\varphi_i\}$ provided by the above Proposition applied to $O$.  
While $x_i$ denote the centers of balls covering $O$, by $\overline{x}_i$ we denote a fixed sequence of points in $\mathcal{C}\cap X_r$ such that $d(x_i,\mathcal{C}) = d(x_i,\overline{B}(\xB_i,r/4))$.

Now we can define the projection map $P_r:L^2(X_r)\to N^{1,2}(X)$ by putting for any $u\in L^2(X_r)$
\begin{equation} \label{Whitney_projection}
    P_r u(x) = \begin{cases}
        \sum_{i} u(\xB_i) \cdot \varphi_i(x), & x\in O\\
       u(\xB), & x\in \overline{B}(\xB, r/4), \text{ for some } \xB\in X_r
    \end{cases}
\end{equation}

It is easy to check that due to properties $(i)-(iv)$ above function $P_r u$ is well defined and condition \eqref{proj_cond1} holds; we leave the details to the
reader.

\begin{lem}\label{lem_liplip}
    $P_r u$ is a locally Lipschitz function and there exists $C>0$ such that 
     \[
     |P_r u(x) - P_r u(y)| \leq C  \left( \sup_{\overline{w}\in X_r\cap B(x,3r)} |\nabla_r u|(\overline{w}) \right) \cdot d(x,y)
     \]
     for all $u\in L^2(X_r,\mu_r)$ and for all $x,y\in X$ with $d(x,y)<r$. 
\end{lem}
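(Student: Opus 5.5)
Since the statement only concerns pairs with $d(x,y)<r$, I would use a "subtract a constant" argument. Fix $x$ and $y$ with $d(x,y)<r$, and fix a reference vertex $\xB_0\in X_r$ with $d(x,\xB_0)<r$, which exists by maximality of $X_r$. Put $c=u(\xB_0)$. Because $\sum_i\varphi_i=1$ on $O$, we have $P_ru-c=P_r(u-c)$, and $P_r$ commutes with subtracting constants on $\mathcal{C}$ as well. So it suffices to bound $|P_r(u-c)(z)|$ and the local Lipschitz constant of $P_r(u-c)$ near $x$ in terms of $M:=\sup_{\overline{w}\in X_r\cap B(x,3r)}|\nabla_r u|(\overline{w})$. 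The vertices that can be involved in $P_ru$ at points within distance about $r$ of $x$ are the $\xB$ with $z\in\overline{B}(\xB,r/4)$, and the $\xB_i$ for Whitney balls $B(x_i,2r_i)\ni z$. Here $d(x_i,\mathcal{C})\le r$, since $X=\bigcup B(\xB,r)$, so $r_i\le r/8$. Hence $d(z,\xB_i)\le 2r_i+d(x_i,\xB_i)\le r/4+r/4+r$, and such $\xB_i$ lie within about $2.5r$ of $x$. Any such vertex $\overline{v}$ satisfies $d(\overline{v},\xB_0)<3r$ roughly, so $\overline{v}\sim\xB_0$ or $\overline{v}=\xB_0$. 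It follows that $|u(\overline{v})-c|\le r\,|\nabla_ru|(\xB_0)\le rM$. I may need to adjust the radius $3r$ in the supremum, or chain through one intermediate vertex, if the constants do not quite fit.

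\textbf{Case analysis.} If both $x$ and $y$ lie in $\mathcal{C}$, they lie in closed balls $\overline{B}(\xB,r/4)$ and $\overline{B}(\yB,r/4)$. When $\xB=\yB$ the difference is $0$. Otherwise $d(\xB,\yB)\ge r$ by separation, while $d(\xB,\yB)\le r/2+d(x,y)$. Since $d(x,y)<r$, this gives $d(x,y)\ge r/2$ and $\xB\sim\yB$, so
\[
|u(\xB)-u(\yB)|\le r\,|\nabla_r u|(\xB)\le 2M\,d(x,y).
\]
For points of $O$, the key estimate is a gradient bound. On $O$, locally we have $P_ru-c=\sum_i(u(\xB_i)-c)\varphi_i$, where each $\varphi_i$ is $C/r_i$-Lipschitz and at most $C$ terms are nonzero. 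This gives only $\Lip P_ru(z)\lesssim rM/r_i$, which blows up near $\mathcal{C}$. This is the main obstacle, and I would fix it as follows. For $z\in O$ with $\delta=d(z,\mathcal{C})$, let $\xB^*$ be the nearest ball center. Every Whitney ball containing $z$ has $r_i\approx\delta$ (Whitney comparability). Then $d(x_i,\mathcal{C})=8r_i$, so the boundary ball $\overline{B}(\xB_i,r/4)$ is within $O(\delta)$ of $z$, and hence within $O(\delta)$ of $\overline{B}(\xB^*,r/4)$. If $\delta$ is small compared to $r$, separation forces $\xB_i=\xB^*$, because two distinct balls $\overline{B}(\cdot,r/4)$ are at distance $\ge r/2$. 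Then $P_ru\equiv u(\xB^*)$ near $z$ and the local Lipschitz constant vanishes. For $\delta\gtrsim r$, we have $r_i\approx\delta\gtrsim r$ and the bound $rM/r_i\lesssim M$ holds. Subtracting $c=u(\xB^*)$ instead of $u(\xB_0)$ makes this uniform.

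\textbf{Global estimate.} Together these give $\Lip P_ru\le CM$ pointwise on $O$, and $P_ru$ is locally constant in a neighborhood of each ball $\overline{B}(\xB,r/4)$ intersected with $O$ within distance $cr$. Hence $P_ru$ is continuous across $\partial\mathcal{C}$ and locally Lipschitz. For general $x,y$ with $d(x,y)<r$, I would join them by a geodesic (Remark~\ref{rem:geodesic}) of length $d(x,y)$ that stays in $B(x,r)$. Integrating the pointwise bound along it, with $M$ taken over $B(x,3r)$ after enlarging the radii in the above geometry, gives $|P_ru(x)-P_ru(y)|\le C M\,d(x,y)$. Passing from pointwise $\Lip\le CM$ to this Lipschitz estimate along the curve is valid since $\Lip$ of a locally Lipschitz function is an upper gradient.
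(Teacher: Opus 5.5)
Your argument is correct, but for points of $O$ it takes a different route from the paper. The paper works only with two-point estimates. Your Case 1 is its Case 1. It then treats $x\in O$, $y\in\mathcal{C}$ directly: either $d(x,y)<r/8$ and every relevant $\xB_i$ equals $\yB$, or $d(x,y)\ge r/8>r_i$ and every $\xB_i\sim\yB$. For $x,y\in O$ lying in a common ball $B(x_j,2r_j)$, it uses the same small/large Whitney-radius dichotomy as you. Otherwise it chains $x\to x'\to y'\to y$ through auxiliary points $x'\in\overline{B}(\xB_i,r/4)$, $y'\in\overline{B}(\yB_j,r/4)$ and reuses the first two cases.

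You instead prove only the infinitesimal version of the dichotomy:
\begin{enumerate}
\item[(a)] $P_ru$ is locally constant on the $\tfrac{3r}{16}$-neighbourhood of each $\overline{B}(\xB,r/4)$;
\item[(b)] $\Lip P_ru(z)\lesssim|\nabla_ru|(\xB^*)$ when $d(z,\mathcal{C})\gtrsim r$.
\end{enumerate}
You then integrate $\Lip P_ru$ along a geodesic.

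This costs you two ingredients the paper's argument does not invoke: the geodesic property from Remark~\ref{rem:geodesic} (or quasiconvexity, with a larger radius in the supremum), and the standard fact that $\Lip f$ is an upper gradient of a locally Lipschitz $f$. In exchange, you avoid the chaining step entirely. There one must still check that the auxiliary pairs lie in the regime $d<r$ where Cases 1--2 were proved, and the paper only records $d(x',y')<19\,d(x,y)$. Your pointwise bound also directly yields the upper gradient used in the next lemma.

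Your hedge about the radius is unnecessary. Take $c=u(\xB^*)$; the preliminary choice $c=u(\xB_0)$ can be dropped. For $z$ on the geodesic with $z\in B(x_i,2r_i)$, one has $d(z,\xB_i)\le 10r_i+\tfrac r4<\tfrac32 r$ and $d(z,\xB^*)\le d(z,\mathcal{C})+\tfrac r4<\tfrac54 r$. Hence $d(\xB_i,\xB^*)<3r$ and $d(x,\xB^*)<\tfrac94 r$, which gives exactly the supremum over $B(x,3r)$ in the statement.
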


\begin{proof}
    To show  the Lipschitz property, we break down the argument into three cases based on whether $x\in X$ and $y\in B(x,r)$ are in 
    $O$ or its complement $\mathcal{C}$. 
\vskip .5cm

\noindent{\bf Case 1:} Suppose that $x, y\in\mathcal{C}$. 
    In this case there are $\overline{x},\overline{y}\in X_r$ such that $x\in \overline{B}(\xB,r/4)$ and $y\in\overline{B}(\overline{y},r/4)$.
    If $\xB=\overline{y}$, then there is nothing to prove as 
    \begin{equation*}
        |P_ru(x)-P_ru(y)|=0\le |\nabla_ru(\xB)|\, d(x,y).
    \end{equation*}

    If $\xB\ne \overline{y}$, then $d(\overline{x},\overline{y})>r$, so by triangle inequality $\frac{r}{2}=r-\frac{r}{4}-\frac{r}{4}\le d(x,y)$. Also since $d(x,y)<r$ the same triangle inequality implies $d(\xB,\overline{y})\le r+\frac{r}{4}+\frac{r}{4}=\frac{3r}{2}$. From these two facts we see that  $\xB\sim\overline{y}$ and 
    \[
        |P_r u(x) - P_r u(y)| = |u(\overline{x}) - u(\overline{y})|\le |\nabla_ru(\xB)|\, r\le 2\, |\nabla_ru(\xB)|\, d(x,y).
    \] 

\vskip .5cm    
    
\noindent{\bf Case 2:} $x\in O$ and $y\in \mathcal{C}$.  In this case, there is a point 
$\overline{y}\in X_r$ such that $y\in\overline{B}(\overline{y},r/4)$, and $P_r u(y) = u(\overline{y})$. As $\varphi_i$ is a partition of unity we can write 
    \[
   | P_r u(x)  - P_r u(y)|  = \left| \sum_i (u(\overline{x}_i) - u(\overline{y})) \varphi_i(x) \right|.
    \]
    By property (i) in Proposition 3.4 no more than $C$ of them are nonzero at $x$ and for all of the corresponding indexes $i$ we have $d(x,x_i)<2r_i$. In other words, 
    \begin{equation}
        \label{OC}
        | P_r u(x)  - P_r u(y)|  \leq C \max_{i\, :\, d(x_i,x)\leq 2r_i}\ |u(\xB_i) - u(\yB)|.
    \end{equation}
    All such $r_i$ are comparable as $d(x_i,\mathcal{C}) = 8r_i$ and thus 
    \begin{equation*}
        6r_i \leq d(x,\mathcal{C})\leq 10 r_i.
    \end{equation*}
    Note that if $d(x,y) < \frac{r}{8}  $ then $d(x,\mathcal{C})< \frac{r}{8} $ and thanks to the above estimate 
    \begin{equation*}
       d(x_i,B(\yB, r/4)) \leq d(x_i,x) + d(x,\mathcal{C}) \leq 2r_i + d(x,\mathcal{C}) \leq \frac{4}{3}  d(x,\mathcal{C}) < \frac{r}{6}. 
    \end{equation*}
    As all elements of $X_r$ are $r$-separated, it shows that if $d(x,y) < \frac{r}{8}  $ then all $\overline{x}_i$ in \eqref{OC} have to be $\overline{y}$, and there is nothing for us to prove. 
    
    The last conclusion allows us to assume that 
    \begin{equation}
        \label{rEst}
        d(x,y)\geq \frac{r}{8}>r_i.
    \end{equation}
    Then 
    \[
    d(x,y)<r, \ d(x_i,x)<r/4, \ d(x_i,\overline{x}_i)\leq r/4+8r_i< 5r/4, \ d(y,\overline{y})\leq r/4
    \] 
    and the triangle inequality yields
    \begin{equation*}
        d(\overline{x}_i,\overline{y}) < 3r,
    \end{equation*}
    which makes $\overline{x}_i\sim \overline{y}$ for all $\overline{x}_i$ in \eqref{OC}, and hence 
    \[
        | P_r u(x)  - P_r u(y)|  \leq C |\nabla_r u|(\yB) \cdot r.
    \]
    Recalling \eqref{rEst} we get 
    \begin{equation*}
        | P_r u(x)  - P_r u(y)|  \leq 8C |\nabla_r u|(\yB) \cdot d(x,y). 
    \end{equation*}

    \noindent\textbf{Case 3:} Both $x\in O$ and  $y\in O$. 
    There are two possibilities - either both $x$ and $y$ lie in the same ball $B(x_j,2r_j)$ for some $j$ or 
    $y\in O\setminus \bigcup_{2B_k\ni x} 2B_k$. In the former case,  using the fact that $\varphi_i$ are $C/r_i$-Lipschitz 
    \begin{equation*}
        | P_r u(x)  - P_r u(y)| =  \left| \sum_i u(\xB_i) (\varphi_i(x) - \varphi_i(y)  )\right|  = \left| \sum_i (u(\xB_i) - u(\xB_j)  )(\varphi_i(x) - \varphi_i(y)  )\right| 
    \end{equation*}
    \begin{equation*}
         \leq C \max_{d(x_i,x_j)\leq 2r_i} \frac{|u(\xB_i) - u(\xB_j) |}{r_i} \cdot d(x,y) \leq 100C |\nabla_r u(\xB_j)| d(x,y),
    \end{equation*}
    where the last inequality can be justified as in the above case assuming that  $r_j\leq r/100$, and arguing that $r_j> r/100$ would imply that all $\xB_i = \xB_j$.

    It now remains to prove the result assuming $y\in O\setminus \bigcup_{2B_k\ni x} 2B_k$.  Fix any two balls $B(x_i,r_i), B(x_j,r_j)$ that contain $x$ and $y$. Also put $x'\in B(\xB_i, r/4) \cap B(x, 9 r_i)$ and $y'\in B(\yB_j, r/4) \cap B(y, 9 r_j)$. Then $ d(x,y) \geq  \max(r_i,r_j)$ and therefore 
    \begin{equation*}
        d(x,x') < 9r_i < 9 d(x,y),
    \end{equation*}
    \begin{equation*}
        d(y,y') < 9r_j < 9 d(x,y),
    \end{equation*}
    \begin{equation*}
        d(x',y') < 19 d(x,y),
    \end{equation*}
    and by the previous cases
    \begin{align*}
        |P_r u(x) - P_r u(y)| &\leq |P_r u(x) - P_r u(x')| + |P_r u(x') - P_r u(y')| + |P_r u(y') - P_r u(y)| \\
       & \leq C |\nabla_r u(\xB_i)| d(x,x') + C |\nabla_r u(\xB_i)| d(x',y') +  C |\nabla_r u|(\yB_j) d(y',y)\\
       & < 37 C \max_{w} |\nabla_r u(w)| d(x,y).
    \end{align*}
    \end{proof}

\begin{lem}
    Function $P_r u$ satisfies conditions \eqref{proj_cond2} and \eqref{proj_cond3}.    
\end{lem}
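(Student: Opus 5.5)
For condition \eqref{proj_cond2} I will turn the local Lipschitz estimate of Lemma~\ref{lem_liplip} into an upper gradient. The candidate is the Borel function
\[
g(x):=C\sup_{\overline{w}\in X_r\cap B(x,3r)}|\nabla_r u|(\overline{w}).
\]
Lemma~\ref{lem_liplip} shows that for every $x\in X$ and $y$ with $d(x,y)<r$ we have $|P_ru(x)-P_ru(y)|\le g(x)\,d(x,y)$. Hence $\Lip P_ru(x)\le g(x)$. For a locally Lipschitz function, the pointwise Lipschitz constant $\Lip P_ru$ is an upper gradient, so $g$ is an upper gradient of $P_ru$. Alternatively, one can split a rectifiable curve into pieces of length less than $r$ and apply the lemma on each piece.

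It then remains to bound $\int_X g^2\,d\mu$.
\begin{itemize}
\item Since $X=\bigcup_{\xB\in X_r}B(\xB,r)$, the integral is at most $\sum_{\xB}\int_{B(\xB,r)}g^2\,d\mu$.
\item For $x\in B(\xB,r)$, every $\overline{w}$ in the supremum lies in $B(\xB,4r)$.
\item By doubling and the $r$-separation of $X_r$, the number of points of $X_r$ in $B(\xB,4r)$ is bounded by a constant $M$ depending only on $C_D$. So $g(x)^2\le C^2\sum_{\overline{w}\in B(\xB,4r)}|\nabla_ru|(\overline{w})^2$.
\item Doubling gives $\mu(B(\xB,r))\approx\mu(B(\overline{w},r/4))=\mu_r(\{\overline{w}\})$ for such $\overline{w}$, since $d(\xB,\overline{w})<4r$.
\item Each $\overline{w}$ is counted for at most $M$ centers $\xB$.
\item By Cauchy--Schwarz over the boundedly many neighbours of $\overline{w}$, we have $|\nabla_ru|(\overline{w})^2\le M\sum_{\overline{y}\sim\overline{w}}|u(\overline{w})-u(\overline{y})|^2/r^2$.
\end{itemize}
Combining these gives \eqref{proj_cond2} with a constant depending only on $C_D$ and on the constant of Lemma~\ref{lem_liplip}. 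The same estimate together with local Lipschitz continuity yields $P_ru\in N^{1,2}_{loc}$. For the $L^2$ part of the norm, I would bound $|P_ru|$ on each $B(\xB,r)$ by a finite combination of the values $|u(\overline{w})|$ with $\overline{w}$ near $\xB$, and again compare measures via doubling.

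For condition \eqref{proj_cond3}, take $u\in D^{1,2}_0(\Om_r)$, so that $u$ vanishes on $X_r\setminus\Om_r$. I will show that $P_ru\equiv0$ on $X\setminus\Om$, and in fact on a neighbourhood of $X\setminus\Om$.
\begin{itemize}
\item Let $x\notin\Om$. If $x\in\overline{B}(\xB,r/4)$, then $B(\xB,20r)\not\subset\Om$, so $\xB\notin\Om_r$ and $P_ru(x)=u(\xB)=0$.
\item If $x\in O$, then $P_ru(x)$ is a combination of the values $u(\xB_i)$ over indices $i$ with $x\in B(x_i,2r_i)$. I need to check that each such $\xB_i$ is close to $x$.
\item Because $X_r$ is maximal $r$-separated, $d(x,\mathcal{C})<r$. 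Hence $r_i\lesssim r$, $d(x,x_i)<2r_i$, and $d(x_i,\xB_i)\le 8r_i+r/4$.
\item With the bounds $6r_i\le d(x,\mathcal{C})\le10r_i$ from the proof of Lemma~\ref{lem_liplip}, this gives $d(x,\xB_i)\le 2r_i+8r_i+r/4<3r$.
\item Therefore $B(\xB_i,20r)\ni x\notin\Om$, so $\xB_i\notin\Om_r$ and $u(\xB_i)=0$.
\end{itemize}
Thus $P_ru=0$ everywhere on $X\setminus\Om$, not merely $2$-quasi-everywhere, and $P_ru\in N^{1,2}_0(\Om)$.

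I expect the main technical obstacle to be the bounded-overlap bookkeeping in \eqref{proj_cond2}: the supremum over $B(x,3r)$ must be passed to a sum with uniformly controlled multiplicity, and the measures $\mu(B(\xB,r))$ and $\mu_r(\{\overline{w}\})$ must be compared uniformly in $r$. The upper-gradient step itself is standard once Lemma~\ref{lem_liplip} is available.
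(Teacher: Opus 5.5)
Your proposal is correct and follows essentially the same route as the paper: you upgrade Lemma~\ref{lem_liplip} to an upper gradient, bound $\int_X g^2\,d\mu$ by doubling and bounded overlap, and check boundary consistency by showing that every $\xB_i$ contributing to $P_ru(x)$ for $x\notin\Om$ lies at distance well below $20r$ from $x$, hence outside $\Om_r$. The paper obtains the upper gradient by partitioning curves into pieces of length less than $r$, which forces the radius $4r$; your $\Lip P_ru\le g$ argument keeps radius $3r$. Your version is also slightly more complete, since it spells out the bounded-degree Cauchy--Schwarz step from $\sum|\nabla_ru|^2\mu_r$ to the edge sum, and the $L^2$ bound needed for $P_ru\in N^{1,2}(X)$; the paper leaves both implicit.
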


\begin{proof}
Lemma \ref{lem_liplip} allows us to show \eqref{proj_cond2} as follows. Note that for any rectifiable $\gamma:[a,b]\to X$, and any partition $t_i\in [a,b]$  we have 
\begin{equation*}
    |P_r u(\gamma(b)) - P_r u(\gamma(a))| \leq \sum_{i=1}^{N} |P_r u(\gamma(t_i)) - P_r u(\gamma(t_{i-1}))|.
\end{equation*}
Assuming that $t_i$ are chosen so that the arclength of $\gamma\vert_{[t_{i-1},t_i]}$ is less than $r$ we get from Lemma \ref{lem_liplip} that 
\begin{equation} \label{t2}
    \begin{aligned}
        |P_r u(\gamma(b)) - P_r u(\gamma(a))| & \leq \sum_{i=1}^{N} C  \left( \sup_{\overline{w}\in X_r\cap B(\gamma(t_i),3r)} |\nabla_r u|(\overline{w}) \right)  d(\gamma(t_i),\gamma(t_{i-1}) ) \\
        & \leq  \sum_{i=1}^{N} \int_{\gamma\vert_{[t_{i-1},t_i]}} C \sup_{\overline{w}\in X_r\cap B(\gamma(t),4r)} |\nabla_r u|(\overline{w})  \ ds.  \\
        & = \int_{\gamma} C \sup_{\overline{w}\in X_r\cap B(\gamma(t),4r)} |\nabla_r u|(\overline{w})  \ ds
    \end{aligned}
\end{equation}
which shows that $C \sup_{\overline{w}\in X_r\cap B(\gamma(t),4r)} |\nabla_r u|(\overline{w}) $ is an upper gradient of $P_r u$.

Denoting by $\chi:\R\to\R$ the characteristic function of interval $[0,1]$ we also get 
\begin{equation}
    \label{t3}
    \begin{aligned}
        \int\limits_X \sup_{\overline{w}\in X_r\cap B(x,4r)} |\nabla_r u|^2(\overline{w}) \ d\mu(x) & \leq  \int\limits_X \sum_{\overline{w}\in X_r} \chi\left(\frac{d(x,\overline{w})}{4r}\right) |\nabla_r u|^2(\overline{w}) \ d\mu(x) \\
        &  =  \sum_{\overline{w}\in X_r} |\nabla_r u|^2(\overline{w})\int\limits_X  \chi\left(\frac{d(x,\overline{w})}{4r}\right)  \ d\mu(x) \\ 
        &  = \sum_{\overline{w}\in X_r} |\nabla_r u|^2(\overline{w}) \cdot \mu(B(\overline{w}, 4r))\\
        & \leq C \sum_{\overline{w}\in X_r} |\nabla_r u|^2(\overline{w})\, \mu_r(\{\overline{w}\}), 
    \end{aligned}
\end{equation}
for any $C$ greater than the square of the doubling constant.

From \eqref{t2} and \eqref{t3} we deduce that there is $C>0$ such that 
\begin{equation*}
    \inf_{g} \int_X g^2\, d\mu\le   C \sum_{\overline{w}\in X_r} |\nabla_r u|^2(\overline{w})\, \mu_r(\{\overline{w}\}),
\end{equation*}
where the infimum is over all upper gradients $g$ of $P_ru$.
i.e. \eqref{proj_cond2} holds.

Finally, to see that \eqref{proj_cond3} holds, suppose  that $u\in D^{1,2}_0(\Omega_r)$ i.e.
\begin{equation} \label{D12_0_cond_backwards}
    B(\xB, 10r)\not\subset \Omega \implies  u(\xB) =  0, \qquad \forall \xB\in X_r
\end{equation} 
and consider any $x\in X\setminus \Omega$. Then either $x\in \cup_{\xB\in X_r} \overline{B}(\xB, r/4)$ or not. 

If there is $\xB\in X_r$ such that $x \in \overline{B}(\xB, r/4)$, then $d(\xB,X\setminus \Omega)\leq r/4$ and from to \eqref{Whitney_projection} and \eqref{D12_0_cond_backwards}
\begin{equation} \label{t4}
    P_r u(x) = u(\xB) = 0.
\end{equation}

If $x\notin \cup_{\xB\in X_r} \overline{B}(\xB, r/4)$, then by 
Proposition~\ref{WhitneyFromNagesBook} and the construction of $P_r$ there is $x_j\in X\setminus \mathcal{C}$ such that 
\begin{equation*}
    x\in B(x_j, r_j) \cap (X\setminus \Omega)
\end{equation*}
and 
\begin{equation} \label{t5}
    P_r u(x) = \sum_{i} u(\xB_i) \varphi_i(x), 
\end{equation}
where 
\begin{equation} \label{t6}
    d(x,x_i)<2r_i < r/4, \qquad \text{ and } \qquad  d(x_i, B(\xB_i, r/4)) = 8r_i < r.
\end{equation}
As $x\in X\setminus \Omega$, and \eqref{t6} shows that $  d(\xB_i,x) < 2r$ we conclude from \eqref{t5} and \eqref{D12_0_cond_backwards} that 
\begin{equation} \label{t7}
    P_r u(x) = 0.
\end{equation}
Lines \eqref{t4} and \eqref{t7} now show that $P_r u \in N^{1,2}_0(\Omega)$ i.e. \eqref{proj_cond3} holds. 
\end{proof}

\subsection{Projection via path-integral extensions}

In this second construction of the projection map $P_r$, we use a variant of the McShane extension technique usually
employed for Lipschtiz functions; see~\cite{Chee}.

\begin{defn}
For any $r>0$ and any $u:X_r \longrightarrow \mathbb{R}$, we define 
\begin{align}
\overline{g}(x)=\ 2\, \sum_{\overline{w}\in X_r,d(\overline{w},x)<3r} \sum_{\overline{y}\sim \overline{w}} \frac{\vert u(\overline{y})-u(\overline{w})\vert}{r},
\end{align}
and for $x\in X$, set
\begin{align}
P_ru(x)=\inf\left\{u(\xB)+\int_{\gamma}\overline{g}ds\right\},
\end{align}
where the infimum is taken over all $\xB\in X_r$ and all rectifiable curves joining $x$ and $\overline{B}(\xB,\tfrac{r}{4})$.
\end{defn}

The fact that $P_ru$ is measurable on $X$ follows from~\cite[Theorem~9.3.1]{HKSTbook}.

\begin{thm}\label{thm:gbar-ug}
There is a constant $C_1>0$ such that the following holds true.
For any $r>0$ and any $u:X_r \longrightarrow \mathbb{R}$, the function $\overline{g}$ is an upper gradient of $P_ru$ and
we have
\begin{align} \label{ineq.PrGradient.UpperBound}
\inf_g\, \int_X g^2\, d\mu\le C_1 \sum_{\xB\in X_r}\sum_{\yB\sim\xB}\, \frac{|u(\xB)-u(\yB)|^2}{r^2}\mu_r(\{\xB\}),
\end{align}
where the infimum is over all upper gradients $g$ of $P_ru$. That is, \eqref{proj_cond2} holds true.
\end{thm}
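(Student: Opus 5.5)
The plan follows the classical McShane argument: show that $P_ru$ is finite everywhere and agrees with $u(\yB)$ on each $\overline{B}(\yB,r/4)$, deduce the upper gradient property by concatenating curves, and then bound $\int_X\overline{g}^2\,d\mu$ by a bounded-overlap computation as in \eqref{t3}.

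\emph{Step 1: the key chain estimate.} Let $\xB,\yB\in X_r$, and let $\gamma$ be a rectifiable curve from a point of $\overline{B}(\yB,r/4)$ to a point of $\overline{B}(\xB,r/4)$. I claim that
\[
\int_\gamma\overline{g}\,ds\ge |u(\xB)-u(\yB)|.
\]
\begin{itemize}
\item If $\xB=\yB$ there is nothing to prove.
\item Otherwise $d(\xB,\yB)\ge r$, so $\ell(\gamma)\ge r/2$. Split $\gamma$ at times $t_0<\dots<t_N$ into consecutive subarcs whose lengths lie in $[r/2,r)$; this is possible by merging a short final piece with its predecessor.
\item Set $w_0=\yB$ and $w_N=\xB$. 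For $0<i<N$, let $w_i\in X_r$ satisfy $d(w_i,\gamma(t_i))<r$, which exists by maximality of $X_r$.
\item Consecutive points satisfy $d(w_i,w_{i+1})\le r+r+r=3r$, so $w_i\sim w_{i+1}$. Every point of the $i$-th subarc is within distance $r+r<3r$ of $w_i$.
\item Hence on that subarc $\overline{g}\ge 2|u(w_{i+1})-u(w_i)|/r$. Since the subarc has length at least $r/2$, its contribution to $\int_\gamma\overline{g}\,ds$ is at least $|u(w_{i+1})-u(w_i)|$.
\item Summing over $i$ and applying the triangle inequality gives the claim.
\end{itemize}
The care needed in choosing the partition and the $w_i$, so that both the neighbor relation and the $3r$ proximity hold at once, is the main technical point.

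\emph{Step 2: finiteness and consistency.} Fix $x\in X$ and choose $\yB\in X_r$ with $d(x,\yB)<r$. Remark~\ref{rem:geodesic} gives a geodesic from $x$ to $\yB$, so $P_ru(x)\le u(\yB)+\int\overline{g}\,ds<\infty$.

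For the lower bound, take any competitor $(\xB,\gamma)$. Prepend a geodesic segment $\sigma$ from $\overline{B}(\yB,r/4)$ to $x$ and apply Step 1 to $\sigma\ast\gamma$. This gives
\[
u(\xB)+\int_\gamma\overline{g}\,ds\ge u(\yB)-\int_\sigma\overline{g}\,ds>-\infty .
\]
In particular, for $x\in\overline{B}(\yB,r/4)$ we may take $\sigma$ constant, so $P_ru(x)\ge u(\yB)$. The reverse inequality comes from the trivial competitor, hence $P_ru=u(\yB)$ on $\overline{B}(\yB,r/4)$.

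\emph{Step 3: upper gradient.} Let $\gamma$ be a curve joining $x$ and $y$. Any competitor $(\xB,\beta)$ for $P_ru(y)$ yields the competitor $(\xB,\gamma\ast\beta)$ for $P_ru(x)$. Therefore $P_ru(x)\le P_ru(y)+\int_\gamma\overline{g}\,ds$. Since both values are finite, exchanging the roles of $x$ and $y$ gives $|P_ru(x)-P_ru(y)|\le\int_\gamma\overline{g}\,ds$.

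\emph{Step 4: energy estimate.} By doubling and $r$-separation, the number of $\overline{w}\in X_r$ with $d(\overline{w},x)<3r$ is bounded by some $M$, and each $\overline{w}$ has at most $M$ neighbors. Cauchy--Schwarz then gives
\[
\overline{g}(x)^2\le C\sum_{\overline{w}:\,d(\overline{w},x)<3r}\ \sum_{\yB\sim\overline{w}}\frac{|u(\yB)-u(\overline{w})|^2}{r^2}.
\]
Integrating over $X$ and exchanging sum and integral as in \eqref{t3} produces the weights $\mu(B(\overline{w},3r))$. Doubling bounds these by $C\,\mu(B(\overline{w},r/4))=C\mu_r(\{\overline{w}\})$, which yields \eqref{ineq.PrGradient.UpperBound}.
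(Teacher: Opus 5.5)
Your proposal is correct and follows the paper's proof in its essentials: concatenating a competitor curve with $\gamma$ (the McShane-type argument) gives the upper gradient property, and a doubling/bounded-overlap computation (the same one as in \eqref{t3}) gives the energy bound. You also add the chain estimate of Step~1 and the finiteness check of Step~2. The paper's $\varepsilon$-competitor argument tacitly needs these, since it only establishes $P_ru=u(\xB)$ on $\overline{B}(\xB,r/4)$ afterwards, in Lemma~\ref{lem:Prz}, via an exit-time chain of balls; your partition of $\gamma$ into arcs of length in $[r/2,r)$ is a cleaner way to run that chain argument.
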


\begin{proof}
We first show that $\overline{g}$ is an upper gradient of $P_ru$. Adopting the idea from the proof 
of~\cite[Lemma 7.2.13]{HKSTbook}, we need to show that
\begin{align} \label{ineq.UpperGradient}
\vert P_ru(x)-P_ru(y)\vert \leqslant \int_{\gamma} \overline{g}ds
\end{align}
for any rectifiable curve $\gamma$ joining $x$ and $y$. Let us fix a rectifiable curve $\gamma$ in $X$ joining $x$ to $y$.

For any $\varepsilon>0$, there exists some $\xB \in X_r$ such that 
\begin{align}
P_ru(x) \geqslant u(\xB)+\int_{\gamma_{\varepsilon}}\overline{g}ds-\varepsilon
\end{align}
for some rectifiable curve $\gamma_{\varepsilon}$ joining $x$ and some point $z\in \overline{B}(\xB,\frac{r}{4})$. The concatenation
of $\gamma_\eps$ with $\gamma$ gives a rectifiable curve joining $y$ to $\overline{B}(\xB,\tfrac{r}{4})$, and so
\[
P_ru(y)\le u(\xB)+\int_{\gamma_\eps}\overline{g}\, ds+\int_\gamma\overline{g}\, ds
\le P_ru(x)+\eps+\int_\gamma\overline{g}\, ds.
\]
Letting $\eps\to 0^+$ we see that
\[
P_ru(y)\le P_ru(x)+\int_\gamma\overline{g}\, ds.
\]
Reversing the roles of $x$ and $y$ in the above argument also gives
\[
P_ru(x)\le P_ru(y)+\int_\gamma\overline{g}\, ds.
\]
Therefore~\eqref{ineq.UpperGradient} holds,
that is, $\overline{g}$ is an upper gradient of $P_ru$.

Further, by the doubling property of $\mu$ (which implies that the degree of each vertex has a uniform upper bound),
\begin{align*}
 \int_{B(\xB,4r)}\overline{g}^2\, d\mu 
&\le C\, \mu(B(\xB,r))\, \sum_{\overline{w}\in X_r\, :\, d(\overline{w},\xB)<4r}\ \sum_{\yB\sim\overline{w}}\frac{|\uB_r(\yB)-\uB_r(\overline{w})|^2}{r^2}\\
&\le C\,  \sum_{\overline{w}\in X_r\, :\, d(\overline{w},\xB)<4r}\ 
 \sum_{\yB\sim\overline{w}}\frac{|\uB_r(\yB)-\uB_r(\overline{w})|^2}{r^2}\, \mu_r(\{\overline{w}\}).
\end{align*}
By the bounded overlap property of the balls $B(\xB,4r)$, $\xB\in X_r$, and by the fact that $B(\xB,r)$, $\xB\in X_r$,
covers $X$, we obtain the desired inequality
 \[
\inf_g\ \int_Xg^2\, d\mu\le  \int_{X}\overline{g}^2\, d\mu
\lesssim \sum_{\overline{w}\in X_r}\sum_{\yB\sim\overline{w}}\frac{|\uB_r(\yB)-\uB_r(\overline{w})|^2}{r^2}\, \mu_r(\{\overline{w}\}),
\]
where the infimum is over all upper gradients of $P_ru$.
\end{proof}

\begin{lem}\label{lem:Prz}
For any $z\in B(\xB,\frac{r}{4})$, we have $(P_ru)(z)=u(\xB)$ and thus $(P_ru)_r=u$, which shows that~\eqref{proj_cond1} holds true.
\end{lem}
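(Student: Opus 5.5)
The plan is to prove the two inequalities $P_ru(z)\le u(\xB)$ and $P_ru(z)\ge u(\xB)$ separately. Once we know $P_ru\equiv u(\xB)$ on $B(\xB,r/4)$, averaging over that ball gives $(P_ru)_r(\xB)=u(\xB)$, which is \eqref{proj_cond1}.

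\emph{Upper bound.} Since $z\in\overline{B}(\xB,r/4)$, the constant curve at $z$ is an admissible curve joining $z$ to $\overline{B}(\xB,r/4)$. Along it $\int\overline{g}\,ds=0$, so $P_ru(z)\le u(\xB)$.

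\emph{Lower bound.} Fix $\yB\in X_r$ and a rectifiable curve $\gamma$ from $z$ to a point of $\overline{B}(\yB,r/4)$. We must show
\[
u(\yB)+\int_\gamma\overline{g}\,ds\ge u(\xB).
\]
If $\yB=\xB$ this holds because $\overline{g}\ge0$. If $\yB\ne\xB$, then $r$-separation gives $d(\xB,\yB)\ge r$, so
\[
\ell(\gamma)\ge d(\xB,\yB)-\tfrac r2\ge \tfrac r2 .
\]
We then build a discrete chain of vertices shadowing $\gamma$.

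\begin{enumerate}
\item Parametrize $\gamma$ by arclength and cut it into consecutive subarcs $\gamma_1,\dots,\gamma_N$, each of length in $[r/2,r)$. Use arclength steps of $r/2$ and absorb a short final piece into the preceding one; this is possible since $\ell(\gamma)\ge r/2$.
\item Let $p_0=z,p_1,\dots,p_N$ be the endpoints of the subarcs. Set $\overline{w}_0=\xB$ and $\overline{w}_N=\yB$. For $0<k<N$, let $\overline{w}_k\in X_r$ be a vertex with $d(p_k,\overline{w}_k)<r$, which exists by maximality of $X_r$.
\item Consecutive vertices are neighbors: by the triangle inequality $d(\overline{w}_k,\overline{w}_{k+1})<r+r+r=3r$, so $\overline{w}_k\sim\overline{w}_{k+1}$ (or the two coincide).
\item On each subarc, every point $x\in\gamma_{k+1}$ satisfies $d(x,\overline{w}_k)<r+r<3r$. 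Hence the term $\overline{w}=\overline{w}_k$, $\overline{y}=\overline{w}_{k+1}$ appears in the definition of $\overline{g}(x)$, and
\[
\overline{g}(x)\ge 2|u(\overline{w}_{k+1})-u(\overline{w}_k)|/r .
\]
Integrating over $\gamma_{k+1}$, whose length is at least $r/2$, gives $\int_{\gamma_{k+1}}\overline{g}\,ds\ge |u(\overline{w}_{k+1})-u(\overline{w}_k)|$.
\item Summing over $k$ and applying the triangle inequality yields $\int_\gamma\overline{g}\,ds\ge|u(\yB)-u(\xB)|$, which gives the required lower bound.
\end{enumerate}

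Taking the infimum over all $\yB$ and $\gamma$ then yields $P_ru(z)\ge u(\xB)$.

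The main obstacle is the bookkeeping in the chain construction. Each subarc must be long enough (at least $r/2$) that the factor $2/r$ in $\overline{g}$ compensates. At the same time it must be short enough that successive shadow vertices stay within $3r$ of each other and of every point on the subarc. With the choice of lengths in $[r/2,r)$, both constraints hold comfortably. Some care is also needed at the first and last pieces, where the endpoints lie within $r/4$ of $\xB$ and $\yB$ rather than within $r$, but these estimates are only better.
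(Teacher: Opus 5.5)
Your proof is correct. It shares the paper's overall strategy: the upper bound comes from the trivial curve, and the lower bound shadows $\gamma$ by a chain of adjacent vertices and telescopes. Each piece of $\gamma$ has length at least $r/2$ and lies where $\overline{g}\ge 2|u(\overline{w}_{k+1})-u(\overline{w}_k)|/r$.

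Where you differ is in how you cut $\gamma$. The paper cuts at exit times from the balls $B(\xB_i,3r/2)$. This keeps each piece within $3r/2$ of its vertex and puts consecutive vertices less than $5r/2$ apart. The cost is two separate cases. One handles curves that never leave $B(\xB,3r/2)$. The other handles a possibly short final piece, where the paper drops $\xB_N$ and links $\xB_{N-1}$ directly to $\zB$ through the $9r/4<3r$ estimate.

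You instead subdivide by arclength into pieces of length in $[r/2,r)$ and allow consecutive shadow vertices to coincide. This gives both the lower length bound and the proximity bounds uniformly at every step, so no case analysis is needed. The price is that your interior neighbor estimate $r+r+r$ uses essentially all of the $3r$ threshold in the edge relation. The strict inequality you obtain still suffices here.
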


\begin{proof}
It suffices to show $(P_ru)(z)=u(\xB)$ for every $z\in \overline{B}(\xB,\frac{r}{4})$, and then by the definition of $(P_ru)_r$, we obtain $(P_ru)_r=u$.

Let $z\in \overline{B}(\xB,\tfrac{r}{4})$. 
By the construction of $P_ru$ we know that $P_ru(z)\le u(\xB)$.
So it suffices to show that $(P_ru)(z) \ge u(\xB)$.  
To do so,  we only need to show that
\[
u(\xB)-u(\zB) \leqslant \int_{\gamma} \overline{g} ds
\] 
for all 
$\zB\in X_r$ and any rectifiable curve $\gamma:[0,1] \rightarrow X$ joining $z$ and some point $y\in \overline{B}(\zB,\frac{r}{4})$. 
Let us now fix $\zB\in X_r$ with $\zB\ne \xB$, and $\gamma$ a rectifiable curve as above, connecting $z$ to some 
point $y\in\overline{B}(\zB,\tfrac{r}{4})$.

If $\gamma \subseteq B(\xB,\frac{3r}{2})$, then we have $B(\xB,\frac{3r}{2}) \cap B(\zB,\frac{r}{4}) \neq \emptyset$, so 
$d(\xB,\zB) \le \frac{3r}{2}+\frac{r}{4}=\frac{7r}{4} <3r$ and thus $\xB \sim \zB$. As $d(\xB,\zB) \ge r$, we have
\begin{align*}
\ell(\gamma) \ge d(\xB,\zB)-\frac{r}{4}-\frac{r}{4} \ge \frac{r}{2}.
\end{align*}
For any $x\in \gamma$, we have $d(x,\xB) \le \frac{3r}{2}<3r$ automatically, since $\gamma \subseteq B(\xB,\frac{3r}{2})$. This gives 
\[
2\, \frac{\vert  u(\zB)-u(\xB) \vert}{r} \le \overline{g}(x)
\] 
for any $x\in \widetilde{\gamma}$. In this way, we have, as desired, 
\begin{align*}
u(\xB)-u(\zB)  \le \vert  u(\zB)-u(\xB) \vert\le 2\, \frac{\vert  u(\zB)-u(\xB) \vert}{r} \cdot \frac{r}{2}
\le \int_{\gamma} \overline{g} ds.
\end{align*}
If $\gamma$ leaves $B(\xB,\frac{3r}{2})$, then we construct a chain of balls $\{B(\xB_i,r) \}_{i=1}^N$ connecting 
$\xB$ and $\zB$ as follows. Let $\xB_0=\xB$ and $t_0=0$, and we then set
\[
t_1=\inf\{t\in [0,1]\, :\, \gamma(t)\not\in B(\xB_0,\tfrac{3r}{2})\}.
\] 
Note that $\gamma(t_1)\in\overline{B}(\xB,\frac{3r}{2})$.
Assuming that $\xB_i$ and $t_i$, $i=0,\cdots, j$ have been chosen so that $\gamma([t_i,t_{i+1}))\subset B(\xB_i,\frac{3r}{2})$ for
$i=0,\cdots, j-1$, then we have either 
$t_j=1$, in which case the choosing procedure terminates, or we have 
$\gamma([t_{j},1])\subset \overline{B}(\xB_j, \tfrac{3r}{2})$, in which case the process of choosing the points
$\xB_i$ and $t_i$ stops as well, or else we choose 
\[
t_{j+1}=\inf\{t\in [t_j,1]\, :\, \gamma(t)\not\in B(\xB_i,\tfrac{3r}{2})\}
\]
and then choose $\xB_{j+1}\in X_r$ such that $d(\gamma(t_{j+1}),\xB_{j+1})<r$. 
Note that $\gamma(t_{j+1})\in\overline{B}(\xB_j,\frac{3r}{2})\setminus B(\xB_j,\frac{3r}{2})$.

This process eventually terminates as $\gamma$ is rectifiable, and $\ell(\gamma\vert_{[t_i,t_{i+1}]})\ge r/2$ at each intermediate 
step above because $\gamma\vert_{[t_i,t_{i+1}]}$ intersects both $\overline{B}(\xB_i,\tfrac{3r}{2})$ 
and $X\setminus B(\xB_i,\tfrac{3r}{2})$. So there is some
positive integer $N$ for which $\gamma([t_{N},1])\subset \overline{B}(\xB_N,\frac{3r}{2})$.

By our construction of 
$\xB_i$ and $\xB_{i+1}$, we see that $r\le d(\xB_i,\xB_{i+1}) \le r+\tfrac{3r}{2}<2r$  
for $i=0,1,\cdots,N-1$. 
Thus $\xB_i \sim \xB_{i+1}$ for $i=0,1,\cdots,N-1$. 
For $i=0,\cdots, N-1$ set $\gamma_i=\gamma\vert_{[t_{i},t_{i+1}]}$; as pointed out above, $\ell(\gamma_i)\ge \frac{r}{2}$.
For any $z\in \gamma_i$, we have $d(z,\xB_i) \leqslant \frac{3r}{2}< 3r$ automatically, 
since $\gamma_i \subseteq \overline{B}(\xB_i,\frac{3r}{2})$. This gives 
\[
2\, \frac{\vert u(\xB_i)-u(\xB_{i+1})\vert}{r} \le \overline{g}(z)
\] 
for any point $z$ in the trajectory of $\gamma_i$. In this way, for each $i=0,1,\cdots,N-2$,
\begin{align*}
\vert u(\xB_i)-u(\xB_{i+1})\vert  \le 2\, \frac{\vert u(\xB_i)-u(\xB_{i+1})\vert}{r} \cdot \frac{r}{2}
\le \int_{\gamma_i} \overline{g} ds.
\end{align*}
It follows that for $j=1,\cdots, m\le N-1$, 
\[
u(\xB)-u(\xB_{j+1})\le |u(\xB)-u(\xB_{j+1})|\le \sum_{i=0}^{j}|u(\xB_i)-u(\xB_{i+1})|\le \int_{\gamma\vert_{[0,t_{j+1}]}}\overline{g}\, ds.
\]
If $\zB=\xB_j$ for some $j\in\{0,\cdots, N\}$, then we have that $u(\xB)\le u(\zB)+\int_\gamma\overline{g}\, ds$ as desired.
If there is no such $j$, then in particular we have that $\xB_{N-1}\ne \zB$; however, $\gamma([t_N,1])$ is contained in
$B(\xB_N, \frac{3r}{2})$ and $\gamma(1)\in\overline{B}(\zB,r/4)$. It follows that 
\[
d(\xB_N,\zB)\le d(\xB_N, \gamma(1))+d(\gamma(1),\zB)<\frac{3r}{2}+\frac{r}{4}<3r,
\]
and so $\zB\sim\xB_N$. We set
$t_{N+1}=1$ here. As in the above, $\gamma_N=\gamma\vert_{[t_N,t_{N+1}]}$. 
If $\ell(\gamma_N)\ge \tfrac{r}{2}$, then as in the argument handling $\gamma_j$ for $j\le N-1$ also gives
\[
|u(\xB_N)-u(\zB)|\le \int_{\gamma_N}\overline{g}\, ds,
\]
from which we will have 
\[
u(\xB)-u(\zB)\le |u(\xB)-u(\xB_N)|+|(\xB_N)-u(\zB)|\le \int_{\gamma}\overline{g}\, ds.
\]
If $\ell(\gamma_N)<\tfrac{r}{2}$, then $d(\xB_{N-1},\zB) \le d(\xB_{N-1},\gamma(t_{N}))+\ell(\gamma_N)+d(\gamma(1),\zB) <\tfrac{3r}{2}+\tfrac{r}{2}+\tfrac{r}{4}=\tfrac{9r}{4}<3r$, which forces $\xB_{N-1}\sim \zB$, and so
\[
\overline{g}\vert_{\gamma_{N-1}}\ge 2\, \frac{|u(\xB_{N-1})-u(\zB)|}{r}.
\]
In this case, we have
\begin{align*}
u(\xB)-u(\zB)\le \sum_{i=0}^{N-2}|u(\xB_i)-u(\xB_{i+1})| +|u(\xB_{N-1})-u(\zB)| \le \int_{\gamma\vert_{[0,t_{N-1}]}}\overline{g}\, ds \le \int_{\gamma} \overline{g}\, ds.
\end{align*}

Combining the above arguments, we see that 
$u(\xB)\le u(\zB)+\int_\gamma\overline{g}\, ds$ whenever $\gamma$ is a rectifiable
path in $X$ connecting $\overline{B}(\xB,r/4)$ with $\overline{B}(\zB,r/4)$. Therefore $P_ru(z)=u(\xB)$ as claimed.
\end{proof}

Recall that with $\Om$ a bounded domain in $X$ with $\mu(X\setminus\Om)>0$, we have $\Om_r$ to be the collection of
all $\xB\in X_r$ for which $B(\xB, 20r)\subset\Om$. 
The analog of the following lemma for the first construction of $P_r$ follows immediately from its construction.

\begin{lem}
Suppose that $u:X_r\to\R$ such that $u=0$ on $X_r\setminus\Om_r$. Then $P_ru=0$ on $X\setminus\Om$, that is,
$P_ru\in N^{1,2}_0(\Om)$. Hence~\eqref{proj_cond3} holds true.
\end{lem}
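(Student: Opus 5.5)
We need to show that if $u=0$ on $X_r\setminus\Om_r$, then $P_ru(x)=0$ for every $x\in X\setminus\Om$, where $P_ru$ is the path-integral extension. Since $P_ru\in N^{1,2}(X)$ by Theorem~\ref{thm:gbar-ug} (it has the upper gradient $\overline{g}\in L^2$, and $P_ru\in L^2$ because it vanishes off a bounded set, as argued below), vanishing pointwise on $X\setminus\Om$ gives membership in $N^{1,2}_0(\Om)$.

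Fix $x\in X\setminus\Om$. Pick $\zB\in X_r$ with $d(x,\zB)<r$, which exists by maximality of $X_r$. Then $B(\zB,20r)\not\subset\Om$, so $\zB\notin\Om_r$ and $u(\zB)=0$.

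\textbf{Upper bound.} Choose a geodesic from $x$ to $\overline{B}(\zB,r/4)$ (Remark~\ref{rem:geodesic}). Its length is less than $r$, and along it $\overline{g}$ only involves vertices $\overline{w}$ with $d(\overline{w},x)<4r$. Each such $\overline{w}$ satisfies $B(\overline{w},20r)\ni x$, so $\overline{w}\notin\Om_r$ and $u(\overline{w})=0$. Each neighbor $\yB\sim\overline{w}$ lies within $7r$ of $x$, so $u(\yB)=0$ as well. Hence $\overline{g}=0$ on this curve, and $P_ru(x)\le u(\zB)+0=0$.

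\textbf{Lower bound.} We must show $u(\xB)+\int_\gamma\overline{g}\,ds\ge 0$ for every $\xB\in X_r$ and every rectifiable $\gamma$ from $x$ to $\overline{B}(\xB,r/4)$. The cleanest route is to reuse Lemma~\ref{lem:Prz}: its proof establishes $u(\zB)\le u(\xB)+\int_{\gamma'}\overline{g}\,ds$ for every curve $\gamma'$ joining $\overline{B}(\zB,r/4)$ to $\overline{B}(\xB,r/4)$. Concatenate the geodesic $\sigma$ above (on which $\overline{g}=0$) with $\gamma$ to get a curve $\gamma'$ from $\overline{B}(\zB,r/4)$ to $\overline{B}(\xB,r/4)$. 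Then
\[
0=u(\zB)\le u(\xB)+\int_\sigma\overline{g}\,ds+\int_\gamma\overline{g}\,ds=u(\xB)+\int_\gamma\overline{g}\,ds .
\]
Taking the infimum gives $P_ru(x)\ge 0$, so $P_ru(x)=0$.

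\textbf{Main obstacle.} The key step is confirming that the chain argument in Lemma~\ref{lem:Prz} really yields the inequality for curves starting at an arbitrary point of $\overline{B}(\zB,r/4)$, with $\zB$ in the role of the starting vertex. Its proof only uses that the curve begins in $\overline{B}(\zB,r/4)$, so it applies. If one prefers not to lean on it, the alternative is to rerun the ball-chain construction directly from $x$ with starting vertex $\zB$; the distance bounds there have slack, since $d(x,\zB)<r$ instead of $r/4$, and all adjacencies stay within $3r$.

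\textbf{Bounded support.} The same upper/lower bound argument applies at any point farther than about $30r$ from $\Om$, so $P_ru$ vanishes outside a bounded set. Combined with $\overline{g}\in L^2$, this gives $P_ru\in L^2$ and hence $P_ru\in N^{1,2}(X)$.
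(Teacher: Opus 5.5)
Your argument is correct and is essentially the paper's. The paper picks $\xB\in X_r$ with $x\in B(\xB,r)$, notes that $\overline{g}$ vanishes along a geodesic $\beta$ from $x$ to $\xB$ and that $P_ru(\xB)=u(\xB)=0$ by Lemma~\ref{lem:Prz}, and then applies the upper gradient inequality of Theorem~\ref{thm:gbar-ug}; your upper/lower bound split simply unpacks that inequality from the infimum definition. Your ``main obstacle'' is not really one: the statement of Lemma~\ref{lem:Prz} ($P_ru\equiv u(\zB)$ on $\overline{B}(\zB,r/4)$), combined with the definition of $P_ru$ as an infimum, already gives $u(\zB)\le u(\xB)+\int_{\gamma'}\overline{g}\,ds$, and your bounded-support paragraph is redundant once you have shown $P_ru=0$ on all of $X\setminus\Om$.
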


\begin{proof}
We set $\Om[r]=\{x\in\Om\, :\, B(x,r)\subset\Om\}$.
From the construction of $\overline{g}$, we know that for $x\in X$,
\[
\overline{g}(x)=2\, \sum_{X_r\ni\overline{w}\, :\, d(x,\overline{w})<4r}\ \sum_{\zB\sim\overline{w}}\ \frac{|u(\zB)-u(\overline{w})|}{r},
\]
and so $\overline{g}=0$ on $X\setminus\Om[15r]$. From Theorem~\ref{thm:gbar-ug} we also know that $\overline{g}$ is an
upper gradient of $P_ru$, and from Lemma~\ref{lem:Prz} we know that for each $\zB\in X_r\setminus\Om_r$ we have
$P_ru=0$ on the closed ball $\overline{B}(\zB,r/4)$.

Let $x\in X\setminus\Om$. Then there is some $\xB\in X_r$ such that $x\in B(\xB,r)$. 
Recall from Remark~\ref{rem:geodesic} that $(X,d)$ is a geodesic space.
Let $\beta$ be a geodesic in $X$
with end points $x$ and $\xB$. Then the trajectory of $\beta$ lies entirely in $X\setminus\Om[15r]$, and so
\[
|P_ru(x)|=|P_ru(x)-P_ru(\xB)|\le \int_\beta\overline{g}\, ds=0,
\]
from which it follows that $P_ru(x)=0$. This completes the proof.
\end{proof}

\section{$\Gamma$-convergence under weak topology}\label{Sec:4}

We have now fixed $f\in N^{1,2}(X)$ and $\Om$ a bounded domain such that $\mu(X\setminus \Om)>0$.
We have seen that for $u:X_r\to\R$ with $u=0$ on $X_r\setminus\Om_r$, we have $P_ru\in N^{1,2}_0(\Om)$.
We now work with the Banach space $N^{1,2}_0(\Om)$, equipped with the weak topology.

Recall that for $r>0$, the space $D^{1,2}_0(\Om_r)$ is the collection of all functions $u:X_r\to\R$ for which
$u=0$ on $X_r\setminus\Om_r$, and let
$\mathcal{E}_r:N^{1,2}_0(\Om)\to[0,\infty]$ by setting
\[
\mathcal{E}_r(v)=\begin{cases}\widehat{\mathcal{E}_r}(u)=E_r(u_r) &\text{ if }v=P_ru \text{ for some }u\in D^{1,2}_0(\Om_r), \\ 
   \infty&\text{ if no such }u\text{ exists. }\end{cases}
\]

The following definition is from~\cite{DalMaso}.

\begin{defn}
    Let $X$ be a topological space.  Let $\left(F_k\right)$ be a sequence of functions from $X$ into $\overline{\mathbf{R}}$. We say that the 
$\Gamma$-limit of $\left(F_k\right)$ exists if 
 \[
    \sup _{U \in \mathcal{N}(v)} \liminf _{k \rightarrow \infty} \inf _{w \in U} F_k(w) 
      = \sup _{U \in \mathcal{N}(v)} \limsup _{k\rightarrow \infty} \inf _{w \in U} F_k(w) , 
 \]
    where $\mathcal{N}(v)$ denotes the set of all open neighbourhoods of $v$ in $X$. Whenever $\Gamma$-limit exists we denote it by 
 $\Gamma$-$\lim\limits_{k\to \infty} F_k(v)$.
\end{defn}

We wish to use the following~\cite[Corollary~8.12]{DalMaso} to construct $\Gamma$-limits of a sequence $\mathcal{E}_{r_i}$
with $\lim_ir_i=0$.

\begin{lem}\label{lem:DalMaso}
Assume that $\mathcal{U}$ is a Banach space with a separable dual. 
For each positive integer $k$ suppose that $F_k:\mathcal{U}\to[-\infty,\infty]$, and 
let $\Psi: \mathcal{U} \rightarrow \overline{\mathbf{R}}$ be a function satisfying 
\begin{equation}\label{eq:DalMasoForce}
\lim _{\|u\| \rightarrow+\infty} \Psi(u)=+\infty.
\end{equation}
If $F_k \geq \Psi$ for every $k \in \mathbf{N}$, then there exists a subsequence of 
$\left(F_k\right)$ which $\Gamma$-converges in the weak topology of $\mathcal{U}$.
\end{lem}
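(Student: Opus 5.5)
The plan is to reduce to the classical compactness theorem for $\Gamma$-convergence in second countable spaces, using the coercivity bound $F_k\ge\Psi$ to localize everything to norm-bounded sets, where the weak topology is metrizable and separable.

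\textbf{Step 1 (topology on balls).} Since $\mathcal{U}^*$ is separable, $\mathcal{U}$ is separable. Fix a countable dense set $\{\xi_n\}$ in the unit sphere of $\mathcal{U}^*$. On each closed ball $B_R=\{\|u\|\le R\}$ the weak topology coincides with the topology of the metric
\[
\rho(u,v)=\sum_n 2^{-n}|\xi_n(u-v)|.
\]
Moreover, $B_R$ is separable in norm, hence in $\rho$. So $(B_R,\text{weak})$ is second countable. For each $R\in\N$ choose a countable base $\{V^R_j\}_j$ of relatively weakly open subsets of $B_R$.

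\textbf{Step 2 (diagonal extraction).} For every pair $(R,j)$, the sequence $k\mapsto \inf_{w\in V^R_j}F_k(w)$ lies in the compact space $[-\infty,\infty]$. A diagonal argument over the countable family of pairs $(R,j)$ gives a subsequence $(F_{k_m})$ along which $\lim_m \inf_{V^R_j}F_{k_m}$ exists for all $R,j$. As in the proof of the second countable compactness theorem of Dal~Maso, this yields, for each fixed $R$ and each $v\in B_R$,
\[
\sup_{V}\liminf_m\inf_{V}F_{k_m}=\sup_V\limsup_m\inf_V F_{k_m},
\]
where $V$ ranges over relative weak neighbourhoods of $v$ in $B_R$. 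The point is that it suffices to let $V$ range over the basis elements containing $v$, because $\inf_V$ is monotone in $V$.

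\textbf{Step 3 (removing the localization).} This is the main obstacle. A weak neighbourhood $U$ of $v$ in $\mathcal{U}$ is unbounded, so $\inf_U F_k$ might be realized far from $v$. Coercivity handles this. Given $\lambda\in\R$, choose $R>\|v\|$ so large that $\Psi>\lambda$ on $\mathcal{U}\setminus B_R$. Since $F_k\ge\Psi$, we get
\[
\min\Bigl(\inf_U F_k,\lambda\Bigr)=\min\Bigl(\inf_{U\cap B_R}F_k,\lambda\Bigr)\quad\text{for all }k.
\]
The sets $U\cap B_R$ are exactly the relative neighbourhoods of $v$ in $B_R$. Taking $\liminf_m$ and $\limsup_m$, and then $\sup_U$ (which commutes with $\min(\cdot,\lambda)$ by monotonicity), Step 2 gives that the truncations at level $\lambda$ of the $\Gamma$-$\liminf$ and $\Gamma$-$\limsup$ in $\mathcal{U}$ agree. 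Letting $\lambda\to\infty$ shows that the $\Gamma$-limit of $(F_{k_m})$ exists at every $v\in\mathcal{U}$ in the weak topology. The care needed here is in verifying that the $\min(\cdot,\lambda)$ truncation passes through $\liminf$, $\limsup$ and $\sup$, which holds since $t\mapsto\min(t,\lambda)$ is continuous and nondecreasing on $[-\infty,\infty]$.
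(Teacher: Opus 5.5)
Your proof is correct, and it is essentially the argument behind the result the paper uses. The paper does not prove this lemma but cites Dal Maso's Corollary~8.12, which combines three ingredients: metrizability of the weak topology on bounded sets (Proposition~8.7 there), the compactness theorem for $\Gamma$-convergence in second countable spaces, and the equicoercivity $F_k\ge\Psi$ to identify the weak $\Gamma$-limits with the localized ones (Proposition~8.10 there). Your version differs only cosmetically: you diagonalize over countable bases of each ball $B_R$ instead of using one global metric, and you make the localization step explicit through the truncation $\min(\cdot,\lambda)$.
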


Note that $\mathcal{U}=N^{1,2}_0(\Om)$ is a reflexive Banach space with a separable dual, see Remark~\ref{rem:sequentialDalMaso} below 
and~\cite[Lemma 2.4.3]{HKSTbook}. 
The choice of 
\begin{equation}\label{eq:Def-Psi}
\Psi(u)=\left(\left(\frac{1}{C_1}\inf_g\, \int_\Om g^2\, d\mu\right)^{1/2}-\, \left(C_2\, \inf_{g_f}\, \int_\Om g_f^2\, d\mu\right)^{1/2}\right)_+^2,
\end{equation}
where the first infimum is over all upper gradients $g$ of $u$ and the second infimum is over all upper gradients $g_f$ of $u$, and
with $C_1$ given by~\eqref{ineq.PrGradient.UpperBound} and $C_2$ the constant from~\cite[Theorem~1.1(1)]{BLS},
satisfies the inequality~\eqref{eq:DalMasoForce}.
The inequality~\eqref{eq:DalMasoForce} follows from the Maz'ya inequality as in Lemma~\ref{lem:Maz} stated above.

\begin{lem}\label{lemPsiE}
For each $r>0$ and $v\in N^{1,2}_0(\Om)$ we have $\mathcal{E}_r(v)\ge \Psi(v)$, and hence $\mathcal{E}_r$ is coercive.
\end{lem}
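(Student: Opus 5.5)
The plan is to reduce to the case where $\mathcal{E}_r(v)<\infty$, and then combine the projection estimate of Theorem~\ref{thm:gbar-ug} (or \eqref{proj_cond2}) with the upper bound from \cite[Theorem~1.1(1)]{BLS} applied to $f$. If $\mathcal{E}_r(v)=\infty$ there is nothing to prove. Otherwise $v=P_ru$ for some $u\in D^{1,2}_0(\Om_r)$, and $\mathcal{E}_r(v)=E_r(u_r)$. By \eqref{proj_cond1}, $(P_ru)_r=u$, so $v_r=u$ and
\[
\mathcal{E}_r(v)=\sum_{\xB\in X_r}\sum_{\yB\sim\xB}\frac{|(u+f_r)(\yB)-(u+f_r)(\xB)|^2}{r^2}\,\mu_r(\{\xB\}).
\]
This is the square of a weighted $\ell^2$ norm of the edge differences of $u+f_r$.

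The key step is the reverse triangle inequality in this $\ell^2$ norm (Minkowski):
\[
\mathcal{E}_r(v)^{1/2}\ge \Big(\sum\sum\frac{|u(\yB)-u(\xB)|^2}{r^2}\mu_r\Big)^{1/2}-\Big(\sum\sum\frac{|f_r(\yB)-f_r(\xB)|^2}{r^2}\mu_r\Big)^{1/2}.
\]
The first term is bounded below by \eqref{proj_cond2}, i.e.\ Theorem~\ref{thm:gbar-ug}:
\[
\Big(\tfrac{1}{C_1}\inf_g\int_X g^2\,d\mu\Big)^{1/2},
\]
where the infimum is over upper gradients $g$ of $v=P_ru$. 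The second term is exactly $\mathcal{E}_r[f]^{1/2}$ in the notation of \cite{BLS}. By \cite[Theorem~1.1(1)]{BLS} it is at most $(C_2\int g_f^2\,d\mu)^{1/2}$.

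Two domain issues must be handled to match the integrals over $\Om$ in \eqref{eq:Def-Psi}.
\begin{enumerate}
\item[(i)] Since $v\in N^{1,2}_0(\Om)$ vanishes outside $\Om$, one can pass from $X$ to $\Om$ in the first term. An upper gradient restricted to $\Om$, i.e.\ $g\chi_{\Om}$, is a $2$-weak upper gradient of $v$, so the infimum over upper gradients on $\Om$ is not smaller than needed (modulo weak versus genuine upper gradients, which only differ by approximation).
\item[(ii)] The edge sum for $f_r$ can be restricted to the vertices and edges that actually interact with $\Om_r$. Because $u$ vanishes off $\Om_r$, only edges near $\Om$ matter. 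Applying the \cite{BLS} estimate localized (its proof is local: each edge term is controlled by $g_f^2$ on a ball of radius $\approx 4r$) gives control by $\int g_f^2$ over a neighborhood of $\Om$, which I would write as $\int_\Om$ after adjusting $C_2$ or interpreting \eqref{eq:Def-Psi} with the full integral.
\end{enumerate}

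Taking the positive part and squaring then gives $\mathcal{E}_r(v)\ge\Psi(v)$. Coercivity follows as stated before the lemma: by Lemma~\ref{lem:Maz}, $\|v\|_{N^{1,2}}\to\infty$ forces $\int_\Om g_v^2\to\infty$, hence $\Psi(v)\to\infty$.

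The main obstacle is step (ii) together with the mismatch between $\int_X$ and $\int_\Om$ for $f$. If localization is awkward, I would instead simply bound by $\int_X g_f^2$, which is legitimate since the second term of $\Psi$ is a constant independent of $v$ and coercivity is unaffected.
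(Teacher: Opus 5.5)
Your proposal is correct and follows the paper's proof essentially step for step. Like the paper, you dispose of the trivial case when $v$ is not of the form $P_ru$, apply the reverse Minkowski inequality in the weighted edge $\ell^2$ norm, and then use Theorem~\ref{thm:gbar-ug} for the $u$-term and \cite[Theorem~1.1(1)]{BLS} for the $f_r$-term.

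On the domain mismatch you flagged: the paper's own chain ends with $\int_X$ in both terms, so your fallback of bounding the $f$-term by $\int_X g_f^2\,d\mu$ is exactly what it does, in effect reading \eqref{eq:Def-Psi} with $\int_X$ in that term. Your localization in (ii) would also work if you first drop the edges not touching $\Om_r$ (these carry only nonnegative $f_r$-terms) and then apply Minkowski. Every remaining edge has both endpoints at distance at least $17r$ from $X\setminus\Om$, so the relevant balls stay inside $\Om$. The passage from $\int_X$ to $\int_\Om$ in the first term needs no argument, since $\int_X g^2\,d\mu\ge\int_\Om g^2\,d\mu$ for every $g$.
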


\begin{proof}
If there is no $u\in D^{1,2}_0(\Om_r)$ such that $v=P_ru$, then the claimed inequality is trivially true. So suppose that $v=P_ru$
for some $u\in D^{1,2}_0(\Om_r)$. 
Note that $(w+h)_r=w_r+h_r$ whenever $w,h\in L^2_{loc}(X)$. Then
\begin{align*}
\mathcal{E}_r&(v)^{1/2}=\left(\sum_{\xB\in X_r}\, \sum_{\xB\sim\overline{y}\in X_r}\, \frac{|(u+f_r)(\yB)-(u+f_r)(\xB)|^2}{r^2}\mu_r(\{\xB\})\right)^{1/2}\\
\ge &\left(\left(\sum_{\xB\in X_r}\, \sum_{\xB\sim\overline{y}\in X_r}\, \frac{|u(\yB)-u(\xB)|^2}{r^2}\mu_r(\{\xB\})\right)^{1/2}
  - \left(\sum_{\xB\in X_r}\, \sum_{\xB\sim\overline{y}\in X_r}\, \frac{|f_r(\yB)-f_r(\xB)|^2}{r^2}\mu_r(\{\xB\})\right)^{1/2}\right)_+\\
 \ge & \left(\left(\sum_{\xB\in X_r}\, \sum_{\xB\sim\overline{y}\in X_r}\, \frac{|u(\yB)-u(\xB)|^2}{r^2}\mu_r(\{\xB\})\right)^{1/2}
  - \left(C_2\, \inf_{g_f}\, \int_Xg_f^2\, d\mu\right)^{1/2}\right)_+\\
  \ge &\left(\left(\frac{1}{C_1}\inf_g\, \int_Xg^2\, d\mu\right)^{1/2} - \left(C_2\inf_{g_f}\, \int_Xg_f^2\, d\mu\right)^{1/2}\right)_+ = \Psi(v)^{1/2},
\end{align*}
where we have used Theorem~\ref{thm:gbar-ug} in the last step above, and~\cite[Theorem~1.1(1)]{BLS} in the penultimate step.
\end{proof}
 
From the above lemma, with the choice of $F_k=\mathcal{E}_{r_k}$ for a choice of strictly monotone decreasing sequence of $r_k$ 
with $\lim_kr_k=0$,
the condition that $F_k\ge \Psi$ follows. Hence the above Lemma~\ref{lem:DalMaso} of Dal Maso~\cite{DalMaso}
tells us that there is a subsequence of $\mathcal{E}_{r_k}$ which $\Gamma$-converges to an energy $\mathcal{E}$ on $N^{1,2}_0(\Om)$.

\begin{remark}\label{rem:sequentialDalMaso}
Note that $N^{1,2}_0(\Om)$ is a Hilbert space, see for instance~\cite[Theorem~13.5.7]{HKSTbook}, or \cite[Corollary 3.27]{BrezisBook2011}, 
and is separable since Lipschitz functions are dense in that class~\cite[Theorem~8.2.1]{HKSTbook}. 
Therefore, by~\cite[Proposition~8.7]{DalMaso} we know the existence
of a metric $d$ on $N^{1,2}_0(\Om)$ for which the weak topology on $N^{1,2}_0(\Om)$, restricted to norm-bounded subsets, agrees
with the $d$-metric topology on that subset. It now follows from~\cite[Proposition~8.10]{DalMaso} that the $\Gamma$-convergence of
$\mathcal{E}_{r_k}$ to $\mathcal{E}$ satisfies properties~(e) and~(f) identified in~\cite[Proposition~8.1]{DalMaso}. 
That is,
\begin{enumerate}
\item[(i)] Whenever $(u_k)_k$ is a sequence in $N^{1,2}(X)$ and $u\in N^{1,2}_0(\Om)$ such that $u_k\to u$ weakly in $N^{1,2}_0(\Om)$, 
we must have $\mathcal{E}(u)\le \liminf_k \mathcal{E}_{r_k}(u_k)$;
\item[(ii)] For each $u\in N^{1,2}_0(\Om)$ there is a sequence $(u_k)_k$ in $N^{1,2}_0(\Om)$ such that $u_k\to u$ weakly in $N^{1,2}_0(\Om)$ and
$\mathcal{E}(u)\ge \limsup_k\mathcal{E}_{r_k}(u_k)$.
\end{enumerate}
\end{remark}

\section{Proof of Theorem~\ref{thm:main}: Convergence of graph energy minimizers.}\label{Sec:5}

Note that for each fixed $r>0$, the set $\Omega_r$ is finite, and therefore, the space $D^{1,2}_0(\Omega_r)$ is finite-dimensional. For any $\overline{v}\in D^{1,2}_0(\Omega_r)$, consider $E_r(\overline{v})$ 
\begin{equation*}
    \begin{aligned}
        E_r(\overline{v}) & = \sum_{\overline{x}\in X_r}\, \sum_{\overline{x}\sim \overline{y}\in X_r}\, \frac{|(\overline{v}+f_r)(\overline{y})-(\overline{v}+f_r)(\overline{x})|^2}{r^2}\, \mu_r(\{\overline{x}\}) \\ 
        & = \sum_{\overline{x}\in X_r}\, \sum_{\overline{x}\sim \overline{y}\in X_r}\, [\overline{v}^2(\xB)+ \overline{v}^2(\yB) - 2 \overline{v}(\xB)\cdot \overline{v}(\yB)] \frac{\mu_r(\{\overline{x}\}) }{r^2} \\ 
        & + \sum_{\overline{x}\in X_r}\, \sum_{\overline{x}\sim \overline{y}\in X_r}\,  [2 \overline{v}(\xB) f_r(\xB) +  2 \overline{v}(\yB) f_r(\yB) - 2 \overline{v}(\xB) f_r(\yB) -  2 \overline{v}(\yB) f_r(\xB)] \frac{\mu_r(\{\overline{x}\}) }{r^2} \\
        & + \sum_{\overline{x}\in X_r}\, \sum_{\overline{x}\sim \overline{y}\in X_r}\, [f_r^2(\xB)+ f_r^2(\yB) - 2 f_r(\xB)\cdot f_r(\yB)] \frac{\mu_r(\{\overline{x}\}) }{r^2} 
    \end{aligned}
\end{equation*}

This can be simplified using the symmetry $\overline{x}\sim \overline{y}$ iff $\overline{y}\sim \overline{x}$ to  
\begin{equation*}
    \begin{aligned}
        E_r(\overline{v}) &  = \sum_{\overline{x}\in \Om_r}\, \left(\sum_{\overline{x}\sim \overline{y}\in \Om_r}\,\frac{\mu_r(\{\xB,\yB\})}{r^2}\right)\,\vB(\xB)^2\, 
          -\, \sum_{\overline{x}\in \Om_r}\, \sum_{\overline{x}\sim \overline{y}\in \Om_r}\, \frac{\mu_r(\{\xB,\yB\})}{r^2}\vB(\yB)\vB(\xB)\\ 
        & + 2 \sum_{\overline{x}\in \Om_r}\, \sum_{\overline{x}\sim \overline{y}\in \Om_r}\,  [f_r(\xB)  - f_r(\yB)]\, \overline{v}(\xB)\,\frac{\mu_r(\{\overline{x},\yB\}) }{r^2} \\
        & + E_r(0).
    \end{aligned}
\end{equation*}
If we label the values $\overline{v}(\overline{x})$ by $v_i$, then the above shows that  
\begin{equation*}
    E_r(\overline{v}) =\sum_{i,j} a_{i,j} v_i v_j + \sum_{i} b_i v_i  +  E_r(0), 
\end{equation*}
where both sums are finite, $a_{i,j}$ are coefficients of a symmetric positive definite matrix and $b_i$ are some coefficients depending on $f$.

The minimizer $\uB[r]$ of this quadratic function can be found by solving $\nabla E_r(\uB[r])=0$ to get for each positive integer $i$,
\begin{equation*}
    \sum_{j} a_{i,j} \uB[r]_j + b_i  = 0,  
\end{equation*}  
and thus $\uB[r] = A^{-1} b$, where $A^{-1}$ is the inverse of the $a_{i,j}$ matrix.

The above discussion shows that for each $r>0$ we can find $\overline{u}[r]\in D^{1,2}_0(\Om_r)$ such that  we have
\[
E_r(\overline{u}[r])\le E_r(\overline{v}), \qquad \forall \overline{v}\in D^{1,2}_0(\Om_r).
\]

Projecting $\overline{u}[r]$ to $N^{1,2}_0(\Om)$ by $P_r$, we get 
\[
\mathcal{E}_r(P_r\overline{u}[r])=\widehat{\mathcal{E}_r}(P_r\overline{u}[r])\le \mathcal{E}_r(P_r \overline{v}), \qquad \forall \overline{v}\in D^{1,2}_0(\Om_r).
\]

Let us now fix a sequence $r_k \to 0^+$ as $k\to \infty$ and denote $u_k:=P_{r_k}\overline{u}[r_k]$. Then from the previous line,
\begin{equation}\label{eq:minimalityuk}
    \mathcal{E}_{r_k}(u_k) \leq \mathcal{E}_{r_k}(P_{r_k}\overline{v}), \qquad \forall \overline{v}\in D^{1,2}_0(\Om_r).
\end{equation}

\begin{lem}
    There exists $C>0$ such that for all positive integers $k$,
    \begin{equation*}
        \|u_k \|_{ N^{1,2}(\Omega)} < C.
    \end{equation*}
\end{lem}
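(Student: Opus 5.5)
The plan is to compare $u_k$ with the trivial competitor $\overline{v}=0$ in the minimality property~\eqref{eq:minimalityuk}, and then convert the resulting graph-energy bound into an $N^{1,2}$ bound. Since $0\in D^{1,2}_0(\Om_{r_k})$ and $P_{r_k}0=0$, we have
\[
\mathcal{E}_{r_k}(u_k)\le \mathcal{E}_{r_k}(0)=E_{r_k}(0)=\sum_{\xB\in X_{r_k}}\sum_{\yB\sim\xB}\frac{|f_{r_k}(\yB)-f_{r_k}(\xB)|^2}{r_k^2}\,\mu_{r_k}(\{\xB\})\le C_2\int_X g_f^2\,d\mu,
\]
where the last step is~\cite[Theorem~1.1(1)]{BLS}, i.e.\ the bound $\sup_r\mathcal{E}_r[f]\lesssim\int_X g_f^2\,d\mu$ recalled in Section~2. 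This right-hand side is independent of $k$.

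Next I would use Lemma~\ref{lemPsiE}, which gives $\Psi(u_k)\le \mathcal{E}_{r_k}(u_k)$. By the definition~\eqref{eq:Def-Psi} of $\Psi$, write $G_k:=\inf_g\int_X g^2\,d\mu$ over upper gradients $g$ of $u_k$, and $F:=\int_X g_f^2\,d\mu$. Then
\[
\Bigl(\bigl(G_k/C_1\bigr)^{1/2}-\bigl(C_2F\bigr)^{1/2}\Bigr)_+\le (C_2F)^{1/2},
\]
so $G_k\le 4C_1C_2F$ uniformly in $k$. Alternatively, this step can be done directly: the triangle inequality in the weighted $\ell^2$ norm on edges, together with Theorem~\ref{thm:gbar-ug} (or~\eqref{proj_cond2} for the Whitney construction), bounds the upper-gradient energy of $u_k=P_{r_k}\uB[r_k]$ by $C_1$ times the graph energy of $\uB[r_k]$. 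That graph energy is at most $(E_{r_k}(\uB[r_k])^{1/2}+E_{r_k}(0)^{1/2})^2\le 4C_2F$.

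Finally, for the $L^2$ part I would use that $u_k\in N^{1,2}_0(\Om)$ by~\eqref{proj_cond3}. The Maz'ya-type inequality, Lemma~\ref{lem:Maz}, then gives $\int_\Om|u_k|^2\,d\mu\le C\int_\Om g^2\,d\mu$ for every upper gradient $g$ of $u_k$. Taking the infimum over $g$ bounds $\int_\Om |u_k|^2\,d\mu$ by $C\,G_k$. Since $u_k=0$ outside $\Om$, this also controls $\|u_k\|_{L^2(X)}$. Summing the two parts gives $\|u_k\|_{N^{1,2}(\Om)}\le C'$ with $C'$ depending only on $C_1,C_2$, the Maz'ya constant of $\Om$, and $\|f\|_{N^{1,2}(X)}$; one can enlarge $C'$ slightly to get strict inequality.

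The only delicate point is bookkeeping. One must make sure the energy compared in~\eqref{eq:minimalityuk} is evaluated on the discrete function $\uB[r_k]$ itself, since $(P_r u)_r=u$ by~\eqref{proj_cond1}, so $\mathcal{E}_{r_k}(u_k)=E_{r_k}(\uB[r_k])$. One must also check that the constant from~\cite{BLS} is uniform in $r$, which it is because it is a $\sup_r$ bound. No compactness is needed at this stage.
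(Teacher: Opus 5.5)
Your proposal is correct and follows the paper's proof. Like the paper, you compare with $\overline{v}=0$ in~\eqref{eq:minimalityuk}, bound $E_{r_k}(0)$ uniformly in $k$ via \cite[Theorem~1.1(1)]{BLS}, and use $\Psi(u_k)\le\mathcal{E}_{r_k}(u_k)$ from Lemma~\ref{lemPsiE}. The only difference is that you unpack the coercivity of $\Psi$ explicitly: you derive $\inf_g\int_X g^2\,d\mu\le 4C_1C_2\int_X g_f^2\,d\mu$ and then get the $L^2$ bound from Lemma~\ref{lem:Maz}. The paper instead just invokes~\eqref{eq:DalMasoForce}, which rests on the same Maz'ya inequality.
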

\begin{proof}
    Applying the above line with $\overline{v} = 0$ and Lemma \ref{lemPsiE} we  get 
    \begin{equation*}
        \Psi(u_k) \leq \mathcal{E}_{r_k} u_{k} \leq \mathcal{E}_{r_k} 0.  
    \end{equation*}
    By~\cite[Theorem~1.1(1)]{BLS},
    there is a constant $C>0$ such that  
    \begin{equation*}
        \mathcal{E}_{r_k} 0  = E_{r_k} 0 \leq C \, \inf_{g_f}\, \int_X g^2_f d\mu,
    \end{equation*}
    with the infimum over all upper gradients $g_f$ of $f$.
    Hence, we get 
    \begin{equation*}
        \Psi(u_k) \leq C\, \inf_{g_f}\, \int_X g^2_f d\mu, 
    \end{equation*}
    and since by~\eqref{eq:DalMasoForce} the functional $\Psi$ is coercive, the result follows. 

\end{proof}

We follow the recipe listed below:
\begin{enumerate}
\item There is a subsequence, $(u_{k_j})_j$, 
that converges weakly in $N^{1,2}_0(\Om)$ to some function $u_\infty\in N^{1,2}_0(\Om)$
This is because of the reflexivity property of $N^{1,2}_0(\Om)$ and the
Banach-Alaoglu theorem,  see Remark~\ref{rem:sequentialDalMaso}. 
\item By Remark~\ref{rem:sequentialDalMaso}, we know that 
\[
\mathcal{E}(u_\infty)\le \liminf_{j\to\infty}\mathcal{E}_{r_{k_j}}(u_{k_j}).
\]
\end{enumerate}

\begin{lem}\label{lem:EnergyCVG}
We have that
\[
\mathcal{E}(u_\infty)= \lim_{j\to\infty}\mathcal{E}_{r_{k_j}}(u_{k_j})
\]
\end{lem}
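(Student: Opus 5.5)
We already have the lower bound $\mathcal{E}(u_\infty)\le \liminf_j \mathcal{E}_{r_{k_j}}(u_{k_j})$ from item~(i) of Remark~\ref{rem:sequentialDalMaso}. So the plan is to prove the matching upper bound
\[
\limsup_{j\to\infty}\mathcal{E}_{r_{k_j}}(u_{k_j})\le \mathcal{E}(u_\infty),
\]
and combine the two to get existence of the limit and the equality. The tool will be the recovery sequence from item~(ii) of Remark~\ref{rem:sequentialDalMaso}, together with the minimality~\eqref{eq:minimalityuk} of each $u_k$. Two preliminary points need checking:
\begin{itemize}
\item the subsequence $(r_{k_j})_j$ must be a subsequence of the one along which $\mathcal{E}_{r_k}$ $\Gamma$-converges to $\mathcal{E}$; we arrange this by extracting the $\Gamma$-convergent subsequence first and the weakly convergent one second;
\item $\Gamma$-convergence passes to subsequences, so (i) and (ii) still hold along $(r_{k_j})_j$.
\end{itemize}

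\textbf{Case $\mathcal{E}(u_\infty)=\infty$.} The upper bound is trivial.

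\textbf{Case $\mathcal{E}(u_\infty)<\infty$.} By (ii), applied along $(r_{k_j})_j$, there is a sequence $w_j\in N^{1,2}_0(\Om)$ with $w_j\rightharpoonup u_\infty$ weakly and
\[
\limsup_j\mathcal{E}_{r_{k_j}}(w_j)\le \mathcal{E}(u_\infty)<\infty.
\]
So for all large $j$ we have $\mathcal{E}_{r_{k_j}}(w_j)<\infty$. By the definition of $\mathcal{E}_r$, this means $w_j=P_{r_{k_j}}\vB_j$ for some $\vB_j\in D^{1,2}_0(\Om_{r_{k_j}})$. The minimality~\eqref{eq:minimalityuk}, applied with $\overline{v}=\vB_j$, then gives
\[
\mathcal{E}_{r_{k_j}}(u_{k_j})\le \mathcal{E}_{r_{k_j}}(P_{r_{k_j}}\vB_j)=\mathcal{E}_{r_{k_j}}(w_j).
\]
Taking $\limsup$ yields the upper bound.

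\textbf{Main obstacle: well-definedness of $\mathcal{E}_r$.} If $v=P_ru=P_ru'$, we need $E_r(u_r)$ to equal $E_r(u'_r)$. I will use property~\eqref{proj_cond1}: it gives $(P_ru)_r=u$, so $u$ is determined by $v$, namely $u=v_r$. Hence $\mathcal{E}_r(v)=E_r(v_r)$ is unambiguous, and in particular $\mathcal{E}_r(P_r\vB)=E_r(\vB)$. This identification is exactly what makes~\eqref{eq:minimalityuk} an inequality between values of the same functional $\mathcal{E}_r$ that appears in the $\Gamma$-limit. Once this is in place, the remaining steps are routine.
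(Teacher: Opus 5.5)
Your proposal is correct and follows the paper's argument: the upper bound comes from the recovery sequence of Remark~\ref{rem:sequentialDalMaso}(ii) together with the minimality \eqref{eq:minimalityuk}, and the lower bound from Remark~\ref{rem:sequentialDalMaso}(i). Your additional points make explicit what the paper's proof leaves implicit: ordering the two subsequence extractions, treating the case $\mathcal{E}(u_\infty)=\infty$ separately, and using injectivity of $P_r$ via property~(a) to see that $\mathcal{E}_r$ is well defined.
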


\begin{proof}
We apply Remark~\ref{rem:sequentialDalMaso}(ii) to obtain
$v_j\in N^{1,2}_0(\Om)$ such that $(v_j)_j$ converges weakly to $u_\infty$ and so that
$\lim_{j\to\infty}\mathcal{E}_{r_{k_j}}(v_j)=\mathcal{E}(u_\infty)$. By the minimality property~\eqref{eq:minimalityuk} of $u_{k_j}$, we have that
\[
\mathcal{E}(u_\infty)=\lim_{j\to\infty}\mathcal{E}_{r_{k_j}}(v_j)\ge \lim_{j\to\infty}\mathcal{E}_{r_{k_j}}(u_{k_j})\ge \mathcal{E}(u_\infty),
\]
and so we have that 
\[
\mathcal{E}(u_\infty)=\lim_{j\to\infty}\mathcal{E}_{r_{k_j}}(u_{k_j}).
\]
\end{proof}

Recall that the energy $\mathcal{E}$ acts on $N^{1,2}_0(\Om)$ by adding in the contribution of $f$ to functions in that function class.

\begin{lem}
The function $u_\infty$ is an $\mathcal{E}$-energy minimizer on $\Om$. 
\end{lem}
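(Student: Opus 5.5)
The claim is that $\mathcal{E}(u_\infty)\le\mathcal{E}(w)$ for every $w\in N^{1,2}_0(\Om)$. This is the standard fundamental theorem of $\Gamma$-convergence: discrete minimality combines with the recovery sequence and the liminf inequality. Since $\mathcal{E}_{r}$ is finite only on the image $P_r(D^{1,2}_0(\Om_r))$, the discrete minimality~\eqref{eq:minimalityuk} passes to all of $N^{1,2}_0(\Om)$. Indeed, if $w=P_{r}\vB$ for some $\vB\in D^{1,2}_0(\Om_r)$, then $\mathcal{E}_r(u_k)\le\mathcal{E}_r(w)$ by~\eqref{eq:minimalityuk}. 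Otherwise $\mathcal{E}_r(w)=\infty$ and the inequality is trivial. So for each $k$, $u_k$ minimizes $\mathcal{E}_{r_k}$ over all of $N^{1,2}_0(\Om)$.

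Fix $w\in N^{1,2}_0(\Om)$. If $\mathcal{E}(w)=\infty$ there is nothing to prove, so assume $\mathcal{E}(w)<\infty$. The $\Gamma$-convergent subsequence from Lemma~\ref{lem:DalMaso} must be the one along which we extracted $u_{k_j}$. I would first arrange the subsequences so that the weakly convergent subsequence $(u_{k_j})$ is taken inside the $\Gamma$-convergent subsequence. Any further subsequence of a $\Gamma$-convergent sequence still $\Gamma$-converges to the same limit, so properties (i) and (ii) of Remark~\ref{rem:sequentialDalMaso} hold along $(r_{k_j})_j$.

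By Remark~\ref{rem:sequentialDalMaso}(ii), applied along $(r_{k_j})$, there are $w_j\in N^{1,2}_0(\Om)$ with $w_j\to w$ weakly and $\limsup_j\mathcal{E}_{r_{k_j}}(w_j)\le\mathcal{E}(w)$. Using the global minimality from the first paragraph and Lemma~\ref{lem:EnergyCVG}, we get
\[
\mathcal{E}(u_\infty)=\lim_{j\to\infty}\mathcal{E}_{r_{k_j}}(u_{k_j})\le\limsup_{j\to\infty}\mathcal{E}_{r_{k_j}}(w_j)\le\mathcal{E}(w).
\]
Since $\mathcal{E}$ incorporates the boundary data $f$, this says that $u_\infty+f$ minimizes energy among all $w+f$ with $w\in N^{1,2}_0(\Om)$, which is exactly the claim.

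The main obstacle is the bookkeeping needed to make property (ii) valid along the subsequence actually used. The recovery sequence in the remark is only guaranteed along the whole $\Gamma$-convergent sequence. Restricting it to the subsequence $(k_j)$ works because the $\limsup$ along a subsequence is at most the $\limsup$ along the full sequence, and weak convergence is inherited. A secondary point is that the equivalence between sequential and topological $\Gamma$-limits uses norm-boundedness of the recovery sequence. Here this follows from $\Psi(w_j)\le\mathcal{E}_{r_{k_j}}(w_j)$, which is bounded by Lemma~\ref{lemPsiE}, together with the coercivity of $\Psi$.
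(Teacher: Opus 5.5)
Your proof is correct and follows the paper's own argument. It combines the recovery sequence from Remark~\ref{rem:sequentialDalMaso}(ii), the discrete minimality~\eqref{eq:minimalityuk} (valid because $\mathcal{E}_{r}$ is infinite off the image of $P_r$), and Lemma~\ref{lem:EnergyCVG}, and your remark about working inside the $\Gamma$-convergent subsequence just makes explicit what the paper leaves implicit.
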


\begin{proof}
For each $w\in N^{1,2}_0(\Om)$, we want to show that $\mathcal{E}(u_\infty)\le \mathcal{E}(w)$ to show that $u_\infty$ is
a $\mathcal{E}$-minimizer (and so $u_\infty+f$ is the $\mathcal{E}$-harmonic function with boundary values $f$).

Let $w\in N^{1,2}_0(\Om)$.  
If $\mathcal{E}(w)=\infty$, then 
the desired inequality follows, so without loss of generality we can assume that $\mathcal{E}(w)<\infty$. By~(ii) of 
Remark~\ref{rem:sequentialDalMaso}, we can find $w_j\in N^{1,2}_0(\Om)$ such that $(w_j)$ weakly converges to $w$ and
\[
\lim_j\mathcal{E}_{r_{k_j}}(w_j)=\mathcal{E}(w).
\]
So for sufficiently large $j$ we know that $\mathcal{E}_{r_{k_j}}(w_j)$ is finite, and so we can find
$\overline{w_j}\in D^{1,2}_0(\Om_{r_{k_j}})$ such that $w_j=P_{r_{k_j}}(\overline{w_j})$; note that then
by~\eqref{eq:minimalityuk},
\[
\mathcal{E}_{r_{k_j}}(w_j)=E_{r_{k_j}}(\overline{w_j})\ge E_{r_{k_j}}(\uB[r_{k_j}])=\mathcal{E}_{r_{k_j}}(u_{k_j}).
\]
Thus, by Lemma~\ref{lem:EnergyCVG}, we see that
\[
\mathcal{E}(u_\infty)=\liminf_{j\to\infty} \mathcal{E}_{r_{k_j}}(u_{k_j})\le \liminf_{j\to\infty}\mathcal{E}_{r_{k_j}}(w_j)=\mathcal{E}(w).\qedhere
\]
\end{proof}

The above lemmas together complete the proof of Theorem~\ref{thm:main}.

\end{document}